\begin{document}

\newtheorem{theorem}{Theorem}[section]
\newtheorem{corollary}[theorem]{Corollary}
\newtheorem{definition}[theorem]{Definition}
\newtheorem{conjecture}[theorem]{Conjecture}
\newtheorem{question}[theorem]{Question}
\newtheorem{lemma}[theorem]{Lemma}
\newtheorem{proposition}[theorem]{Proposition}
\newtheorem{example}[theorem]{Example}
\newenvironment{proof}{\noindent {\bf
Proof.}}{\rule{3mm}{3mm}\par\medskip}
\newcommand{\remark}{\medskip\par\noindent {\bf Remark.~~}}
\newcommand{\pp}{{\it p.}}
\newcommand{\de}{\em}

\newcommand{\JEC}{{\it Europ. J. Combinatorics},  }
\newcommand{\JCTB}{{\it J. Combin. Theory Ser. B.}, }
\newcommand{\JCT}{{\it J. Combin. Theory}, }
\newcommand{\JGT}{{\it J. Graph Theory}, }
\newcommand{\ComHung}{{\it Combinatorica}, }
\newcommand{\DM}{{\it Discrete Math.}, }
\newcommand{\ARS}{{\it Ars Combin.}, }
\newcommand{\SIAMDM}{{\it SIAM J. Discrete Math.}, }
\newcommand{\SIAMADM}{{\it SIAM J. Algebraic Discrete Methods}, }
\newcommand{\SIAMC}{{\it SIAM J. Comput.}, }
\newcommand{\ConAMS}{{\it Contemp. Math. AMS}, }
\newcommand{\TransAMS}{{\it Trans. Amer. Math. Soc.}, }
\newcommand{\AnDM}{{\it Ann. Discrete Math.}, }
\newcommand{\NBS}{{\it J. Res. Nat. Bur. Standards} {\rm B}, }
\newcommand{\ConNum}{{\it Congr. Numer.}, }
\newcommand{\CJM}{{\it Canad. J. Math.}, }
\newcommand{\JLMS}{{\it J. London Math. Soc.}, }
\newcommand{\PLMS}{{\it Proc. London Math. Soc.}, }
\newcommand{\PAMS}{{\it Proc. Amer. Math. Soc.}, }
\newcommand{\JCMCC}{{\it J. Combin. Math. Combin. Comput.}, }
\newcommand{\GC}{{\it Graphs Combin.}, }

\title{ Laplacian coefficients of unicyclic graphs with the number of leaves and girth
\thanks{
This work is supported by National Natural Science Foundation of
China (No.11271256), innovation Program of Shanghai Municipal Education Commission (No.14ZZ016), the Ph.D. Programs Foundation of Ministry of Education of China (No.20130073110075) and
Funds of Innovation of Shanghai Jiao Tong University (No.Z071007).  }}
\author{Jie Zhang, Xiao-Dong Zhang\thanks{Corresponding  author ({\it E-mail address:}
xiaodong@sjtu.edu.cn)}
\\
{\small Department of Mathematics, and MOE-LSC,}\\
{\small Shanghai Jiao Tong University} \\
{\small  800 Dongchuan road, Shanghai, 200240,  PR China}\\
 }
\date{}
\maketitle
 \begin{abstract}
   Let $G$ be a graph of order $n$ and let $\mathcal{L}(G,\lambda)=\sum_{k=0}^n
(-1)^{k}c_{k}(G)\lambda^{n-k}$ be the characteristic polynomial of its Laplacian
matrix. Motivated by Ili\'{c} and Ili\'{c}'s conjecture
 [A.~Ili\'{c}, M.~Ili\'{c}, Laplacian coefficients of trees with given number of leaves or vertices of degree two,
 Linear Algebra and its Applications 431(2009)2195-2202.]
 on  all extremal graphs which minimize all the Laplacian coefficients in the set $\mathcal{U}_{n,l}$
  of all $n$-vertex unicyclic graphs with the number of leaves $l$, we investigate properties of
  the minimal elements in the partial set $(\mathcal{U}_{n,l}^g, \preceq)$ of the Laplacian coefficients,     where $\mathcal{U}_{n,l}^g$ denote the set of $n$-vertex unicyclic graphs with the
   number of leaves $l$ and  girth $g$. These results are used to disprove  their conjecture. Moreover, the graphs with minimum Laplacian-like energy
    in $\mathcal{U}_{n,l}^g$ are also studied.

 \end{abstract}

{{\bf Key words:} Unicyclic graph; Laplacian
   coefficients; Laplacian-like energy; balanced
   starlike tree.
 }

      {{\bf AMS Classifications:} 05C25, 05C50}
\vskip 0.5cm

\section{Introduction}
Let $G=(V,E)$ be a simple graph with $n=|V|$ vertices and
$n$ edges and $L(G)=D(G)-A(G)$ be its {\it Laplacian matrix,} where $A(G)$ and $D(G)$ are
its adjacency and degree diagonal matrices, respectively.
The {\it Laplacian polynomial} $\mathcal{L}(G,\lambda)$ of $G$ is the characteristic
polynomial of its Laplacian matrix $L(G)$, i.e.,
$$\mathcal{L}(G,\lambda)=det(\lambda I_{n}-L(G))=\sum_{k=0}^n
(-1)^{k}c_{k}(G)\lambda^{n-k}.$$
Then  $L(G)$ has
nonnegative eigenvalues
$\mu_{1}\geq\mu_{2}\geq\cdots\geq\mu_{n-1}\geq\mu_{n}=0$. From
Viette's formula,
$c_{k}=\sigma_{k}(\mu_{1},\mu_{2}\cdots\mu_{n-1})$ is a symmetric
polynomial of order $n-1$. In particular, we have
$c_{0}=1,c_{n}=0,c_{1}=2|E(G)|,c_{n-1}=n\tau(G)$, where $\tau(G)$
is the number of spanning trees of $G$ (see \cite{Merris1995}).
If $G$ is a tree, the Laplacian coefficient $c_{n-2}$ is equal to
its Wiener index, which is the sum of all distances between
unordered pairs of vertices of $G$ (see \cite{Dobrynin2001}, \cite{Yan2006}).
$$c_{n-2}(T)=W(T)=\sum_{u,v\in V} d(u,v).$$
  In general, Laplacian coefficients $c_{k}$ can be expressed in terms of subtree
structures of $G$.

\begin{theorem}\cite{Kelmans1974}\label{theorem1.1}
  Let $\mathcal {F}_{k}$
be the set of all spanning forests of $G$ with exactly $k$
components.
The Laplacian coefficient $c_{n-k}$ of a graph $G$ is expressed by
$c_{n-k}(G)=\sum_{F\in \mathcal {F}
_{k}} \gamma(F)$, where $F$ has $k$ components
$T_{i}$ with $n_{i}$ vertices, $i=1,2,\cdots,k$ and  $\gamma(F)=\prod_{i=1}^k n_{i}$.
\end{theorem}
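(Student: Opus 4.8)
The plan is to derive the formula from the all--minors version of the Matrix--Tree Theorem, obtained via the Cauchy--Binet formula applied to the oriented incidence matrix of $G$. Write $L(G)=QQ^{T}$, where $Q$ is the $n\times|E|$ oriented incidence matrix (after fixing an arbitrary orientation, each column carries a $+1$ and a $-1$ in the rows of the two endpoints of its edge). Since $L$ is symmetric with eigenvalues $\mu_{1},\dots,\mu_{n}$, expanding $\det(\lambda I_{n}-L)=\prod_{i}(\lambda-\mu_{i})$ and comparing with the expansion in the statement shows that $c_{n-k}$ is the $(n-k)$-th elementary symmetric function of the eigenvalues, which equals the sum of all $(n-k)\times(n-k)$ principal minors of $L$:
$$c_{n-k}(G)=\sum_{S\subseteq V,\,|S|=n-k}\det\big(L[S]\big),$$
where $L[S]$ is the principal submatrix on the rows and columns indexed by $S$.

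The key step is to evaluate each principal minor combinatorially. Fix $S$ with complement $W=V\setminus S$, so $|W|=k$, and write $L[S]=Q_{S}Q_{S}^{T}$ with $Q_{S}$ the submatrix of $Q$ on the rows in $S$. Cauchy--Binet gives
$$\det\big(L[S]\big)=\sum_{B\subseteq E,\,|B|=n-k}\big(\det Q_{S}[\,\cdot\,,B]\big)^{2},$$
the sum ranging over edge sets $B$ of size $n-k$, where $Q_{S}[\,\cdot\,,B]$ is the square submatrix on rows $S$ and columns $B$. The unimodularity analysis of incidence matrices shows that $\det Q_{S}[\,\cdot\,,B]=\pm 1$ exactly when $B$ is acyclic and each component of the forest $(V,B)$ contains precisely one vertex of $W$, and is $0$ otherwise; since $|B|=n-k$ forces $(V,B)$ to have exactly $k$ components, this identifies $\det\big(L[S]\big)$ with the number of spanning forests $F\in\mathcal{F}_{k}$ each of whose $k$ trees meets $W$ in a single vertex.

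Finally, I would interchange the order of summation. Summing the identity above over all $S$ of size $n-k$, equivalently over all $W$ of size $k$, each spanning forest $F\in\mathcal{F}_{k}$ with components $T_{1},\dots,T_{k}$ of orders $n_{1},\dots,n_{k}$ is counted once for every set $W$ meeting each $T_{i}$ in exactly one vertex. The number of such $W$ is precisely the number of ways to select one representative vertex from each tree, namely $\prod_{i=1}^{k}n_{i}=\gamma(F)$. Hence $c_{n-k}(G)=\sum_{F\in\mathcal{F}_{k}}\gamma(F)$, which is the asserted formula.

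I expect the main obstacle to be the unimodularity step: verifying precisely that the nonzero $\pm 1$ minors of the reduced incidence matrix $Q_{S}$ correspond bijectively to acyclic edge sets in which each component contains exactly one deleted vertex of $W$. This requires controlling how the deletion of the rows in $W$ interacts with the rank of $Q$ restricted to an acyclic edge set, together with the sign and orientation bookkeeping that guarantees every surviving minor contributes exactly $1$ after squaring; the remaining combinatorial counting of the sets $W$ is then routine.
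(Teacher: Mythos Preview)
The paper does not give its own proof of this statement: Theorem~1.1 is quoted from Kelmans and Chelnokov \cite{Kelmans1974} and used as a tool, with no argument supplied. So there is nothing in the paper to compare your proposal against.

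That said, your outline is the standard and correct derivation. The identification of $c_{n-k}$ with the sum of principal $(n-k)\times(n-k)$ minors of $L$ is immediate from the characteristic polynomial, and writing $L[S]=Q_SQ_S^{T}$ and applying Cauchy--Binet is exactly how one proceeds. Your worry about the unimodularity step is well placed but easily resolved: after permuting rows and columns, $Q_S[\,\cdot\,,B]$ is block-diagonal along the components of $(V,B)$, the $i$-th block having $|S\cap T_i|$ rows and $n_i-1$ columns. Nonsingularity forces each block to be square, i.e.\ $|W\cap T_i|=1$, and each block is then the incidence matrix of a tree with one row removed, which has determinant $\pm 1$ by the usual leaf-peeling (expand along the row of a leaf and induct). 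If $B$ contains a cycle the columns are dependent and the minor vanishes. The final double-counting over $W$ is exactly as you describe. Nothing is missing; the argument is complete once the block structure is made explicit.
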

Recently, the study on the Laplacian coefficients has attracted much
attention. Let $G$ and $H$ be two graphs of order $n$. We write $G\preceq H$ if
$c_{k}(G)\leq c_{k}(H)$ for all $0\leq k\leq n$, and  write
$G\prec H$ if $G\preceq H$ and $c_{k}(G)<c_{k}(H)$ for some $k\in
\{0,1,\cdots,n\}$.
 Mohar \cite{Mohar2007} first investigated properties of the poset (partially ordered set) of
acyclic graphs with the partial order $\preceq$ and proposed some problems. Later, Ili\'{c}\cite{Ilic2009} and   Zhang et al. \cite{Zhang2009}
investigated ordering trees  by the Laplacian
coefficients. Stevanovi\'{c} and Ili\'{c} \cite{Stevanovic2009b}
investigated and characterized the minimum and maximum elements in the poset of unicyclic graphs of order $n$ with $\preceq$.  Tan \cite{Tan2011}
proved that the poset of  unicyclic  graphs of order $n$  and fixed
matching number with $\preceq$ has only one minimal element. He and
Shan \cite{He2010} studied the properties of the poset of bicyclic graphs of order $n$.

{\it The Laplacian-like energy} \cite{Liu2008} of $G$ , $LEL$ for short, is defined as
follows:
$$LEL(G)=\sum_{k=1}^{n-1} \sqrt{\mu_{k}},$$
since it has similar
features as molecular graph energy defined by Gutman \cite{Gutman1978}.
 $LEL$ describes well the properties which have a close relation with the molecular structures and was proved to be as good as the Randi\'{c} index, better than Wiener index in some areas (see \cite{Stevanovic2009c}).  Further, Stevanovi\'{c} in \cite{Stevanovic2009} established a connection between $LEL$ and
Laplacian coefficients.

\begin{theorem}\cite{Stevanovic2009c} \label{theorem1.2}
Let $G$ and $H$ be two graphs with $n$ vertices. If $G\preceq H$, then $LEL(G)\leq LEL(H)$.
Furthermore, if $G\prec H$, then $LEL(G)<LEL(H)$.
\end{theorem}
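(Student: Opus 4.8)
The plan is to convert the defining sum $LEL(G)=\sum_{k=1}^{n-1}\sqrt{\mu_k}$ into an integral whose integrand depends on $G$ only through the Laplacian coefficients $c_j(G)$, and to arrange matters so that each $c_j$ enters \emph{monotonically}. The organizing object is the generating polynomial of the nonzero Laplacian eigenvalues,
$$Q_G(w)=\prod_{k=1}^{n-1}(1+\mu_k w)=\sum_{j=0}^{n-1}c_j(G)\,w^{j},$$
where the second equality is just $c_j=\sigma_j(\mu_1,\dots,\mu_{n-1})$ read off the coefficient list. Since every $c_j(G)\ge 0$, the polynomial $Q_G$ has nonnegative coefficients and $c_0(G)=1$, so the hypothesis $G\preceq H$ translates at once into the pointwise inequality $Q_G(w)\le Q_H(w)$ for all $w\ge 0$.

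The analytic heart is the integral representation
$$\sqrt{\mu}=\frac{1}{2\pi}\int_0^\infty\frac{\log(1+\mu w)}{w^{3/2}}\,dw\qquad(\mu\ge 0),$$
which I would establish by one integration by parts (with $v=-2w^{-1/2}$) followed by the substitution $w=x^2$, reducing the remaining integral to $\int_0^\infty(1+\mu x^2)^{-1}\,dx=\pi/(2\sqrt{\mu})$; the boundary terms vanish because $\log(1+\mu w)\sim\mu w$ near $0$ and grows only logarithmically at $\infty$. Summing this identity over $k=1,\dots,n-1$ and using $\sum_k\log(1+\mu_k w)=\log Q_G(w)$ gives
$$LEL(G)=\frac{1}{2\pi}\int_0^\infty\frac{\log Q_G(w)}{w^{3/2}}\,dw,$$
with convergence secured by the same endpoint estimates ($\log Q_G(w)\sim c_1 w$ near $0$ and $\log Q_G(w)\sim(n-1)\log w$ near $\infty$).

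With this formula the conclusion becomes transparent and manifestly monotone. From $Q_G(w)\le Q_H(w)$ and monotonicity of $\log$ we obtain $\log Q_G(w)\le\log Q_H(w)$ for every $w>0$, and integrating against the positive weight $w^{-3/2}$ yields $LEL(G)\le LEL(H)$, which is the first assertion. For the strict statement, $G\prec H$ means $c_m(G)<c_m(H)$ for some $m\ge 1$; the term $c_m w^m$ then forces $Q_G(w)<Q_H(w)$, hence $\log Q_G(w)<\log Q_H(w)$, for all $w>0$, so the integral inequality is strict and $LEL(G)<LEL(H)$.

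The step I expect to be the real obstacle is obtaining the $\log$-form, not the comparison that follows it. The naive route, writing $LEL(G)=\frac{1}{\pi}\int_0^\infty t^{-1/2}\sum_k \mu_k/(\mu_k+t)\,dt$ and comparing the densities pointwise, fails: here $\sum_k \mu_k/(\mu_k+t)=\bigl(\sum_{j} j c_j t^{\,n-1-j}\bigr)/\bigl(\sum_j c_j t^{\,n-1-j}\bigr)$ is \emph{not} monotone in the individual $c_j$, since its partial derivative in a middle coefficient changes sign with $t$, and one can check the density for $G$ can exceed that for $H$ at some $t$ even when $G\preceq H$. Passing to $\log Q_G$ by integration by parts is exactly what removes this obstruction, trading the sign-indefinite rational density for the monotone quantity $\log Q_G$. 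The only residual technical care is justifying the interchange of the finite sum with the integral and the convergence at the two endpoints, both of which follow from the elementary asymptotics recorded above.
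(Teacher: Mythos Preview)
Your argument is correct. The paper itself does not supply a proof of this theorem; it is quoted verbatim from the cited reference, so there is no ``paper's own proof'' to compare against. For completeness, let me confirm the key steps: the identity $Q_G(w)=\prod_{k=1}^{n-1}(1+\mu_k w)=\sum_{j=0}^{n-1}c_j(G)w^{j}$ is just the generating-function form of $c_j=\sigma_j(\mu_1,\dots,\mu_{n-1})$; the integral $\sqrt{\mu}=\tfrac{1}{2\pi}\int_0^\infty w^{-3/2}\log(1+\mu w)\,dw$ checks out exactly as you describe (the boundary terms vanish and the remaining integral evaluates to $2\pi\sqrt{\mu}$); and the passage from $c_j(G)\le c_j(H)$ to $Q_G(w)\le Q_H(w)$ on $[0,\infty)$, hence $\log Q_G\le\log Q_H$, hence $LEL(G)\le LEL(H)$, is immediate. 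The strict case is also fine: since $c_0=1$ for every graph, any index $m$ with $c_m(G)<c_m(H)$ satisfies $m\ge 1$, so $Q_H(w)-Q_G(w)=\sum_j(c_j(H)-c_j(G))w^j>0$ for every $w>0$, and strict inequality persists after taking $\log$ and integrating against the positive weight $w^{-3/2}$.

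Your closing remark is also apt: the ``resolvent density'' form $\sum_k\mu_k/(\mu_k+t)$ is not coefficientwise monotone, and integration by parts to reach $\log Q_G$ is exactly the step that rescues the comparison. This Coulson-type integral representation is in fact the standard route to results of this kind in the literature (it is how one connects $LEL$, incidence energy, and Laplacian coefficients), so while the paper omits the argument, your proof is essentially the one the cited references have in mind.
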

Let
 \begin{eqnarray*}
 \mathcal{U}_{n,l}&=& \{G\ |\  G \mbox{ is a $n$-vertex unicyclic graph with fixed $l$ leaves }\},\\
 \mathcal{U}_{n,l}^g&=& \{G\ |\ G\in \mathcal{U}_{n,l}  \mbox{ with fixed girth $g$ }\}.
 \end{eqnarray*}
Let  $BST_{n,l}$ be a   balanced starlike tree of order $n$ with $l$ leaves  which is obtained by identifying one end of each of the $l$ paths of orders $\lfloor\tfrac{n-1}{l}\rfloor+1$ or $\lceil\tfrac{n-1}{l}\rceil+1$. Moreover, let   $U_{n,l}^{g,p}$  (see Fig.1) be a balanced starlike unicyclic graph of order $n$ with $l$ leaves and girth $g$, which is obtained by identifying one end of  a path $P_{p+1}$ of order $p+1$ and one vertex of a cycle of order $g$, the other end of $P_{p+1}$ and the center vertex of a balanced starlike tree $BST_{n-p-g+1, l}$, respectively.
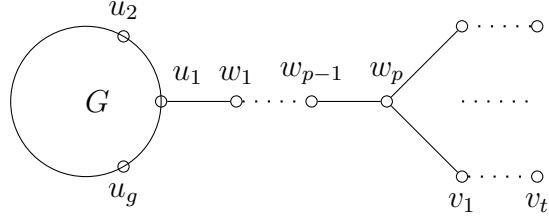
\begin{figure}
\centering
\begin{tikzpicture}
%\tikzstyle{every node}=[draw,shape=circle,inner sep=1.5pt];
\node(v900)[label=0:$G$]at (-1.3,0){};
\node (v0)[draw,shape=circle,inner sep=1.5pt,label=87:$u_1$] at (0:0){};
\node (v1)[draw,shape=circle,inner sep=1.5pt,label=90:$u_{2}$] at(-60:-1){};
\node (v2)[draw,shape=circle,inner sep=1.5pt,label=below:$u_{g}$] at(60:-1){};
\node(v3) [draw,shape=circle,inner sep=1.5pt,label=90:$w_{1}$] at (1,0){};
\node(v4) [draw,shape=circle,inner sep=1.5pt,label=90:$w_{p-1}$] at (2,0){};
\node(v5) [draw,shape=circle,inner sep=1.5pt,label=90:$w_{p}$] at (3,0){};
\node(v6) [draw,shape=circle,inner sep=1.5pt] at (4,1){};
\node(v7) [draw,shape=circle,inner sep=1.5pt,label=-90:$v_1$]at (4,-1){};
\node(v8) [draw,shape=circle,inner sep=1.5pt] at (5,1){};
\node(v9)[draw,shape=circle,inner sep=1.5pt,label=-90:$v_t$] at (5,-1){};
\draw (-1,0) circle(1);
\draw [loosely dotted,thick]
(v8) --(v6) (4,0)--(5,0)
(v7)--(v9)
(v3)--(v4);
\draw (v0)--(v3) (v4)--(v5) (v5)--(v6) (v5)--(v7);
\end{tikzpicture}
\caption{Graph $U_{n,l}^{g,p}$} \label{fig:pepper}
\end{figure}
Ili\'{c} and Ili\'{c} in \cite{Ilic2009b} proposed the following conjecture:
\begin{conjecture}\cite{Ilic2009b}\label{conjecture}
Among all $n$-vertex unicyclic graphs, the graph $U_{n,l}^{3,0}$  has the minimum Laplacian coefficients $c_k,$ $k=0, \dots,n,$
 i.e., $U_{n,l}^{3,0}$ is the only one minimal element in the poset $(\mathcal{U}_{n,l}, \preceq)$.
\end{conjecture}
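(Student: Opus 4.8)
The natural plan is to attack Conjecture~\ref{conjecture} head-on with the forest expansion of Theorem~\ref{theorem1.1}: write each coefficient as
$$c_{n-k}(G)=\sum_{F\in\mathcal{F}_k}\prod_{i=1}^{k} n_i,$$
where $F$ ranges over the spanning forests with $k$ components of sizes $n_1,\dots,n_k$, and then search for local graph transformations that carry an arbitrary $G\in\mathcal{U}_{n,l}$ toward $U_{n,l}^{3,0}$ while never increasing any $c_k$. A proof would amount to producing, for every $G\in\mathcal{U}_{n,l}$, a finite chain of such monotone moves terminating at $U_{n,l}^{3,0}$, together with strictness ($G\prec U_{n,l}^{3,0}$ whenever $G\neq U_{n,l}^{3,0}$) to secure uniqueness of the minimal element.

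I would first fix the girth $g$ and work inside $\mathcal{U}_{n,l}^{g}$. Here the pendant-tree part can be reshaped by Kelmans-type branch-sliding and branch-balancing shifts, which are known to behave monotonically on the forest weights; this should reduce the candidate minimizers to the balanced starlike unicyclic graphs $U_{n,l}^{g,p}$, after which one optimizes over the path length $p$ by comparing $U_{n,l}^{g,p}$ with $U_{n,l}^{g,p\pm 1}$ coefficient by coefficient. A useful anchor is that for any unicyclic graph the number of spanning trees equals the cycle length, so $c_{n-1}(G)=n\tau(G)=ng$; thus the single coefficient $c_{n-1}$ is governed entirely by the girth and is strictly smaller for $g=3$. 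This is exactly the kind of partial, one-coefficient evidence that makes the conjecture plausible, and it is why I expect the within-girth portion of the argument to go through cleanly.

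The decisive step is the cross-girth (and cross-$p$) comparison: to finish a proof one must show that the girth-$3$, $p=0$ graph dominates every $U_{n,l}^{g,p}$ simultaneously in \emph{all} coefficients. This is the step I expect to be the real obstacle, and indeed I expect it to fail. In the forest expansion, shrinking the cycle or changing $p$ redistributes weight between the forests that do and do not use cycle edges, and there is no reason this redistribution should be monotone across all $k$ at once: while $c_{n-1}$ favors $g=3$, a middle coefficient $c_k$ may well move the opposite way, so that a higher-girth minimizer (or a girth-$3$ graph with $p>0$) is \emph{incomparable} to $U_{n,l}^{3,0}$ rather than dominated by it. Rather than force monotonicity through, I would turn the failure into the disproof: compute $c_k(U_{n,l}^{3,0})$ and $c_k(U_{n,l}^{g,p})$ for explicit small parameters, isolate an index $k$ at which the inequality reverses, and thereby exhibit a second minimal element incomparable to $U_{n,l}^{3,0}$. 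This shows that $U_{n,l}^{3,0}$ is not the unique minimal element of $(\mathcal{U}_{n,l},\preceq)$, which is the disproof announced in the abstract; the structural analysis of the refined posets $(\mathcal{U}_{n,l}^{g},\preceq)$ carried out in the second step is precisely what supplies the candidate counterexamples.
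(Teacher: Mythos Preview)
Your plan is essentially correct and matches the paper's approach: the conjecture is disproved by an explicit incomparable pair, and the structural reduction to the family $U_{n,l}^{g,p}$ is what supplies the candidates. One point of emphasis differs, however. You frame the obstruction chiefly as a cross-girth phenomenon, leaning on $c_{n-1}=ng$; the paper's actual counterexample stays entirely within girth~$3$. It takes $G_1=U_{10,2}^{3,0}$ and $G_2=U_{10,2}^{3,1}$, computes both Laplacian polynomials directly, and reads off $c_8(G_1)=382>373=c_8(G_2)$ while $c_k(G_1)<c_k(G_2)$ for $2\le k\le 7$. So the incomparability already arises from varying $p$ with $g=3$ fixed---precisely your parenthetical ``or a girth-$3$ graph with $p>0$''---and the girth discussion is not needed for the disproof itself. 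The paper then upgrades this single example to a general statement (Lemmas~\ref{lemma3.1} and~\ref{lemma3.2}): for distinct admissible $p,q$ the graphs $U^p$ and $U^q$ are always incomparable, proved by showing $c_{n-2}(U^p)<c_{n-2}(U^{p-1})$ via a direct forest count and $c_m(U^p)>c_m(U^{p-1})$ for a range of middle indices $m$ via an injection on spanning forests. Combined with the reductions you outline, this yields not just one counterexample but the complete list of $\lfloor (n-3-2l)/(l+1)\rfloor+1$ minimal elements of $(\mathcal{U}_{n,l}^{3},\preceq)$.
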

However, this conjecture is, in general, not true.  Let $G_1$ and $G_2 $ be the following two unicyclic graphs of orders 10 (see Fig. 2).
\begin{figure}
\centering
\begin{tikzpicture}[scale=1]
\tikzstyle{every node}=[draw,shape=circle,inner sep=1.5pt];
\node (v0) at (0:0){};
\node(v1) at (0:1){};
\node(v2) at(60:1){};
\node(v3) at (2,1){};
\node(v4) at (2,-1){};
\node(v5) at (3,1){};
\node(v6) [draw,shape=circle,inner sep=1.5pt,label=below:$G_1$]at (3,-1){};
\node(v7) at (4,1){};
\node(v8) at (4,-1){};
\node(v10) at (5,-1){};
\draw (v0) --(v1)
(v1) --(v2)
(v2)--(v0)
(v1)--(v4)
(v1)--(v3)
(v8)--(v10)
(v5)--(v7)
(v4)--(v6)
(v6)--(v8)
(v5)--(v3);
%\foreach \from/\tt in {v3/v5,v5/v7,v7/v9,v9/11,v4/v6/,v6/v8,v8/v10,v10/v12}
%\draw (\from) --(\tt);
\end{tikzpicture}
\begin{tikzpicture}[scale=1]
\tikzstyle{every node}=[draw,shape=circle,inner sep=1.5pt];
\node (v0) at (0:0){};
\node(v1) at (0:1){};
\node(v2) at(60:1){};
\node(v3) at (2,0){};
\node(v4) at (3,1){};
\node(v5) at (3,-1){};
\node(v6) at (4,1){};
\node(v7) [draw,shape=circle,inner sep=1.5pt,label=below:$G_2$] at(4,-1){};
\node(v8) at (5,1){};
\node(v9) at (5,-1){};
\draw (v0) --(v1)
(v1) --(v2)
(v2)--(v0)
(v1)--(v3)
(v3)--(v5)
(v3)--(v4)
(v7)--(v9)
(v5)--(v7)
(v4)--(v6)
(v6)--(v8);
\end{tikzpicture}
\caption{Counter-example} \label{fig:pepper}
\end{figure}
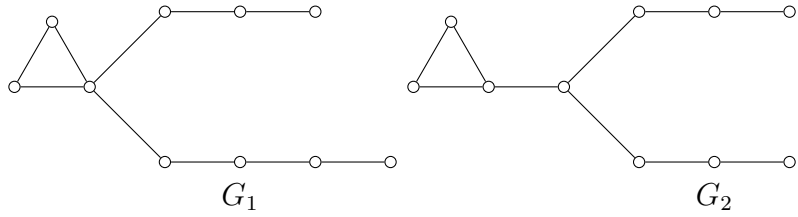
Then their Laplacian characteristic polynomials are
$$\mathcal{L}(G_1,x)=x^{10}-20x^9+167x^8-758x^7+2036x^6-3296x^5+3130x^4-1612x^3+382x^2-30x,$$
$$\mathcal{L}(G_2,x)=x^{10}-20x^9+168x^8-770x^7+2091x^6-3414x^5+3243x^4-1642x^3+373x^2-30x.$$
Hence $G_1$ does not have  minimal Laplacian coefficients $c_k,$ $k=0, \dots,10.$ So this conjecture is, in general, not true. But it is proven that $G_1$ is still a minimal element in the poset $\mathcal{U}_{10,2}$. In fact, there are many minimal elements in the poset $\mathcal{U}_{n,l}$. This paper is organized as follows: in section 2, we investigate some properties of minimal elements in the poset $\mathcal{U}_{n,l}^g$.
In sections 3 and 4, all minimal elements in four special posets of $\mathcal{U}_{n,l}^g$ are characterized, respectively. Finally, in section 5, we conclude this paper with some conjectures.

\section{The minimal elements in $(\mathcal{U}_{n,l}^g, \preceq)$}

Let $v$ be a vertex of a connected graph $G$ and let $N_{G}(v)$
denote the set of the neighbors of $v$ in $G$. Let $d_{G}(v)$
denote the degree of $v$ in $G$, if $d_{G}(v)=1$, $v$ is called
a leaf or a pendent vertex. Say that $P=vv_{1}\cdots
v_{k}$ is a pendant path of length $k$ attached at vertex $v$ if its
interval vertices $v_1\cdots v_{k-1}$ have degree two and $v_k$ is a
leaf. If $k=1$, then $v_1$ is a {\it leaf} and $vv_1$ is called a {\it pendent
edge}. A {\it branch vertex} is a vertex having degree more than two. Moreover,
let $d(u,v)$ denote the distance between vertices $u$ and $v$.
For a $n$-vertex unicyclic graph $G\in \mathcal{U}_{n,l}^g$, $G$ can be obtained from
     a cycle $C_g=u_{1}\cdots u_{g}$ of order $g$ by attaching trees $T_1\cdots T_g$ rooted at $u_{1}, \cdots, u_{g}$, respectively. So  $G$ may be written to be  $C_{T_1, \cdots, T_g}$.

\begin{lemma}\label{theorem2.1}
(\cite{Ilic2010}). Let $v$ be a vertex of a nontrivial connected graph $G$ and for
nonnegative integers $p$ and $q$, let $G(p,q)$ denote the graph
obtained from $G$ by adding two pendent paths of lengths $p$ and $q$
at $v$, respectively, $p \geq q \geq 1$. Then $c_k(G(p,q))\leq
c_k(G(p+1,q-1)), k=0,1,\cdots,n$.
\end{lemma}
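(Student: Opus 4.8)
The plan is to reduce the two graphs to a common base graph and then compare Laplacian coefficients through the spanning-forest formula of Theorem~\ref{theorem1.1}. The key observation is that $G(p,q)$ and $G(p+1,q-1)$ both sit over the graph $H_0:=G(p,q-1)$: writing the long pendant path of $H_0$ as $v x_1\cdots x_p$ and the short one as $v y_1\cdots y_{q-1}$, the graph $G(p,q)$ is obtained from $H_0$ by attaching one new pendant vertex $u$ at $s:=y_{q-1}$ (the far end of the short path, with $s=v$ when $q=1$), while $G(p+1,q-1)$ is obtained from $H_0$ by attaching $u$ at $\ell:=x_p$ (the far end of the long path). Both graphs thus have the same $n$ vertices, and the whole problem becomes: \emph{relocating one pendant vertex from the end of the shorter path to the end of the longer path cannot decrease any Laplacian coefficient.} Throughout I would use the reading of Theorem~\ref{theorem1.1} in which $\gamma(F)=\prod_i n_i$ counts the ways to mark one distinguished vertex in each component of $F$, so that $c_{n-k}(G)$ equals the number of pairs $(F,W)$ with $F$ a spanning forest with $k$ components and $W$ a choice of one vertex per component.

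First I would record how one pendant vertex $u$ attached at a vertex $w$ of $H_0$ affects the coefficients. Splitting the count of marked $k$-forests of $H_0+uw$ according to whether the edge $uw$ is present gives
\begin{equation*}
c_{n-k}(H_0+uw)=c_{n-k}(H_0)+c_{(n-1)-k}(H_0)+b_{(n-1)-k}(H_0,w),
\end{equation*}
where $b_{m}(H_0,w):=\sum_{F}\prod_{j:\,C_j\neq C(w)} n_j=\sum_F \gamma(F)/n_w(F)$ is the weighted forest count in which the component $C(w)$ containing $w$ is left \emph{unmarked}, the sum running over spanning forests of $H_0$ with $(n-1)-m$ components. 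The first two terms on the right are independent of the attachment point, so subtracting the instances $w=\ell$ and $w=s$ yields
\begin{equation*}
c_{n-k}\bigl(G(p+1,q-1)\bigr)-c_{n-k}\bigl(G(p,q)\bigr)=b_{(n-1)-k}(H_0,\ell)-b_{(n-1)-k}(H_0,s).
\end{equation*}
Hence the lemma is equivalent, for all $k$ at once, to the single inequality $b_m(H_0,\ell)\ge b_m(H_0,s)$ for every $m$.

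To prove this I would exploit that $\ell$ and $s$ are endpoints of pendant paths emanating from the branch vertex $v$, with the $\ell$-branch at least as long ($p\ge q-1$, which follows from $p\ge q$). Since $\ell$ is a leaf with unique neighbour $x_{p-1}$, conditioning on whether its incident edge lies in $F$ gives the leaf recursion
\begin{equation*}
b_m(H_0,\ell)=b_{m-1}(H_0-\ell,\,x_{p-1})+c_{m}(H_0-\ell),
\end{equation*}
and symmetrically for $s$. Peeling the two paths in parallel, I would induct on the length of the shorter path: the base case is two pendant paths of equal length, where $b_m(\cdot,\ell)=b_m(\cdot,s)$ by the symmetry exchanging the two branches, and each extra vertex on the longer branch contributes only nonnegative correction terms of the type $c_m(\cdot)$. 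The guiding intuition is that grounding at the \emph{more peripheral} endpoint keeps the unmarked component $C(w)$ smaller, hence enlarges $b_m=\sum_F \gamma(F)/n_w(F)$.

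The main obstacle is this last step: turning ``more peripheral vertices give larger $b_m$'' into a rigorous comparison. The difficulty is that $n_\ell(F)\le n_s(F)$ fails forest-by-forest, so one cannot dominate the two sums pointwise; the inequality holds only after summation. I expect to handle it either by arranging the leaf-peeling recursion into an induction whose step produces a manifestly nonnegative difference, or by interpreting $b_m(H_0,w)$ through the all-minors matrix-tree theorem as a coefficient of the characteristic polynomial of the grounded Laplacian $L(H_0)$ with the row and column of $w$ deleted, and comparing these coefficients for the two principal submatrices. Once $b_m(H_0,\ell)\ge b_m(H_0,s)$ is established, the displayed identity immediately gives $c_k(G(p,q))\le c_k(G(p+1,q-1))$ for all $k$, completing the proof.
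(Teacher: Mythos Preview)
The paper does not prove this lemma; it is quoted from \cite{Ilic2010} without argument, so there is no in-paper proof to compare against. Your reduction to the common base $H_0=G(p,q-1)$, the identity
\[
c_{n-k}\bigl(G(p+1,q-1)\bigr)-c_{n-k}\bigl(G(p,q)\bigr)=b_{(n-1)-k}(H_0,\ell)-b_{(n-1)-k}(H_0,s),
\]
and the leaf recursion for $b_m$ are all correct.

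The gap is precisely where you flag it, but it is more serious than your sketch suggests. Your inductive plan is to grow the long branch one vertex at a time from the symmetric base $G(q-1,q-1)$ and claim that each added leaf contributes only a nonnegative $c_m(\cdot)$ correction to the $\ell$-side. This is not how the recursion behaves: appending the new leaf $\ell$ at the old endpoint $\ell'$ also changes $b_m(\cdot,s)$. A direct computation gives
\[
b_m(H_0,s)=b_m(H_0',s)+b_{m-1}(H_0',s)+\sum_{F'}\frac{\gamma(F')}{\,|C_{F'}(s)|\cdot|C_{F'}(\ell')|\,},
\]
the last sum taken over spanning forests $F'$ of $H_0'$ with $m-1$ edges in which $s$ and $\ell'$ lie in different components. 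Hence the difference $b_m(H_0,\ell)-b_m(H_0,s)$ at the new stage equals $[b_{m-1}(H_0',\ell')-b_{m-1}(H_0',s)]$ plus $[c_m(H_0')-b_m(H_0',s)]$ minus that ``doubly unmarked'' sum; the subtracted term is not dominated by anything the inductive hypothesis provides, and the two competing pieces are sums over forests with different numbers of edges, so nothing telescopes. Neither of your fallback suggestions (a reorganised peeling, or passing to principal minors of $L(H_0)$) resolves this without substantial further argument. The proofs in the literature, and this paper's own proof of the closely analogous Lemma~\ref{lemma2.3}, sidestep the difficulty by constructing an explicit injection between the spanning forests of $G(p,q)$ and those of $G(p+1,q-1)$ directly, rather than factoring through a grounded invariant on a common base graph.
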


 \begin{definition}\label{definition 2.2}
 Let $G$ be an arbitrary connected graph with $n$ vertices.
  If $uv$ is a non-pendent cut edge of $G$ with $d_{G}(v)\geq 3$ and $d_{G}(u)\geq 2$ such that there's at least one pendent path $P_{t+1}=vv_{1}\cdots v_{t}$  attached at $v$,
then  the  graph $G^\prime=\xi(G,uv)$ obtained from $G$  by changing all edges (except $uv, vv_1$)  incident with $v$ into new edges between $u$ and $N_G(v)\setminus \{u,v_1\}$.
 In other words, $$G^\prime=G-\{vx| x\in N_G(v)\setminus \{u,v_1\}\}+\{ux| x\in N_G(v)\setminus \{u,v_1\}\}.$$
 We say that $G^\prime$ is a $\xi$-transformation of $G$. (See Fig. 3).
 \end{definition}
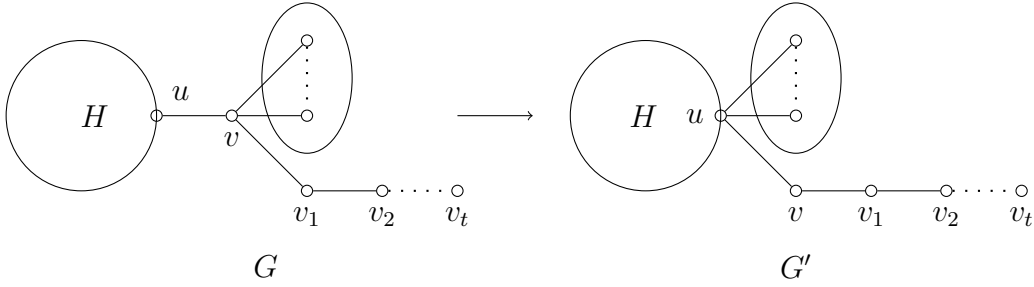
\begin{figure}
\centering
\begin{tikzpicture}
%\tikzstyle{every node}=[draw,shape=circle,inner sep=1.5pt];
\node(v900)[label=0:$H$]at (-1.3,0){};
\node (v0)[draw,shape=circle,inner sep=1.5pt,label=45:$u$] at (0:0){};
\node[draw,shape=circle,inner sep=1.5pt,label=below:$v$](v1) at (1,0){};
\node[draw,shape=circle,inner sep=1.5pt](v2) at (2,1){};
\node[draw,shape=circle,inner sep=1.5pt](v3) at (2,0){};
\node[draw,shape=circle,inner sep=1.5pt,label=below:$v_1$](v4) at (2,-1){};
\node[draw,shape=circle,inner sep=1.5pt,label=below:$v_2$](v5) at (3,-1){};
\node[draw,shape=circle,inner sep=1.5pt,label=below:$v_t$](v6) at (4,-1){};

\draw [loosely dotted,thick] (v2) -- (v3)
(v5) --(v6);
\draw (v0)--(v1)(v1)--(v2) (v1)--(v3)(v1)--(v4)(v4)--(v5);
\draw  (2,0.5) ellipse (0.6 and 1);
\draw (-1,0) circle(1);
%\tikzstyle{every node}=[draw,shape=circle,inner sep=1.5pt];
\node(v900)[label=0:$H$]at (6,0){};
\node (v0)[draw,shape=circle,inner sep=1.5pt,label=180:$u$] at (7.5,0){};
\node[draw,shape=circle,inner sep=1.5pt,label=below:$v$](v1) at (8.5,-1){};
\node[draw,shape=circle,inner sep=1.5pt](v2) at (8.5,1){};
\node[draw,shape=circle,inner sep=1.5pt](v3) at (8.5,0){};
\node[draw,shape=circle,inner sep=1.5pt,label=below:$v_1$](v4) at (9.5,-1){};
\node[draw,shape=circle,inner sep=1.5pt,label=below:$v_2$](v5) at (10.5,-1){};
\node[draw,shape=circle,inner sep=1.5pt,label=below:$v_t$](v6) at (11.5,-1){};

\draw [loosely dotted,thick] (v2) -- (v3)
(v5) --(v6);
\draw (v0)--(v2)(v0)--(v3) (v0)--(v1)(v1)--(v4)(v4)--(v5);
\draw  (8.5,0.5) ellipse (0.6 and 1);
\draw (6.5,0) circle(1);
\draw[->] (4,0)--(5,0);
\node(v900)[label=0:$G$]at (1,-2){};
\node(v900)[label=0:$G'$]at (8,-2){};
\end{tikzpicture}
\caption{$\xi$-transformation } \label{fig:pepper}
\end{figure}
Clearly, $\xi$-transformation preserves the number of leaves in $G$.
 \begin{lemma}\label{lemma2.3}
 Let $G$ be a connected graph of order $n$ and $G^\prime$ be obtained from $G$ by $\xi$-transformation. If  there exists a path $P_{s+1}=uu_1^p\cdots u_{s-1}^pu_s^p$ of order $s+1$ in the component of $G-uv$ and  a pendent path $P_{t+1}=vv_{1}\cdots v_{t}$  attached at $v$ with $s\ge t$, then $G^\prime\preceq G$, i.e,
 $$c_k(G)\geq c_k(G^{\prime}),  k=0, 1,\dots, n,$$
 with equality if and only if $k \in \{0,1,n-1,n\}$.
 \end{lemma}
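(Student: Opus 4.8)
\section{Proof proposal}

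The plan is to work entirely through the spanning-forest formula of Theorem~\ref{theorem1.1} and to compare $G$ with $G'$ configuration by configuration. First I would record that the two graphs share a common \emph{core} $Q:=G-V(B)=G'-V(B)$, where $B$ denotes the union of the branches detached from $v$ (the subtrees rooted at the vertices of $N_G(v)\setminus\{u,v_1\}$); the cut edge $uv$ and the pendent path $vv_1\cdots v_t$ lie in both graphs, and the only difference is that $B$ hangs at $v$ in $G$ and at $u$ in $G'$. Every spanning forest is then described by a triple: a forest $F_c$ of $Q$, a forest $F_B$ of $B$, and a choice of which components of $F_B$ are reattached to the pivot together with the attaching edges. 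Since the attaching edges $\{vx\}$ of $G$ and $\{ux\}$ of $G'$ (for $x\in N_G(v)\setminus\{u,v_1\}$) are in natural bijection, each triple is simultaneously a valid configuration for $G$ and for $G'$, uses the same number of edges, and so contributes to the same coefficient $c_{n-k}$.

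Next I would carry out the decisive cancellation. Fix a configuration with core forest $F_c$ and write $P_u=|C_{F_c}(u)|$, $P_v=|C_{F_c}(v)|$ for the sizes of the core components of $u$ and $v$, let $\Sigma$ be the total size of the reattached branches, and let $R_0$ be the product of the sizes of all remaining components. If $uv\in F_c$ then $u,v$ share a component and the two graphs give identical $\gamma$-values. If $uv\notin F_c$ then $u,v$ lie in distinct components and a direct computation yields
\[
\gamma_G-\gamma_{G'}=R_0\big[(P_v+\Sigma)P_u-(P_u+\Sigma)P_v\big]=R_0\,\Sigma\,(P_u-P_v).
\]
Summing over all configurations and factoring off the (nonnegative) branch contribution, I obtain $c_{n-k}(G)-c_{n-k}(G')=\sum_{j}\Omega(j)\,D(k-j)$, where $\Omega(j)\ge 0$ depends only on $B$ and
\[
D(m)=\sum_{\substack{F_c\ \text{forest of }Q,\ uv\notin F_c\\ \#\mathrm{comp}=m}}\gamma(F_c)\Big(\tfrac{1}{P_v}-\tfrac{1}{P_u}\Big).
\]
Thus everything reduces to the single inequality $D(m)\ge 0$ for all $m$.

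The hard part is proving $D(m)\ge 0$, and this is where the hypothesis $s\ge t$ must enter. Since $uv\notin F_c$, the component $C_{F_c}(v)$ is exactly a prefix of the pendent path $vv_1\cdots v_t$, so $P_v\le t+1$, whereas $C_{F_c}(u)$ can grow along the path $uu_1^p\cdots u_s^p$ on the $u$-side. The approach I would take is to organise the sum by how $F_c$ cuts the two pendent paths and encode it in generating functions: with $h_\ell(x)$ the size-weighted forest polynomial of a bare path $P_\ell$, the cut data of a pendent path of length $\ell$ produce $g_\ell=\sum_{j\le\ell}h_j$ (unweighted by the root-block size) and $f_\ell=\sum_{j\le\ell}(\ell-j+1)h_j$ (weighted by it). In the model case where the $u$-side is itself just the path, $D(m)$ is the coefficient of $x^{m-2}$ in $f_sg_t-f_tg_s=\phi_s\sigma_t-\phi_t\sigma_s$, where $\sigma_\ell=g_\ell$ and $\phi_\ell=\sum_{j\le\ell}(\ell-j)h_j$ obeys $\phi_\ell=\phi_{\ell-1}+\sigma_{\ell-1}$. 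The crux is the coefficientwise domination $\phi_s\sigma_t\succeq\phi_t\sigma_s$ for $s\ge t$; telescoping reduces it to $\sigma_\ell\sigma_t\succeq\phi_t\,h_{\ell+1}$ for $\ell\ge t$, which I would establish by a weight-preserving injection on path cuttings — it is exactly here that $s\ge t$ guarantees enough room on the $u$-path to absorb a $v$-prefix. For a general $H$ one replaces $f_s,g_s$ by the $|C_{F_c}(u)|$-weighted and unweighted forest generating functions of $H$, which inherit the long-path factor, and the same domination persists.

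Finally I would settle the equality statement. The values $c_0=1$, $c_1=2|E|$ and $c_n=0$ coincide for $G$ and $G'$, and $c_{n-1}=n\tau$ agrees because the $\xi$-transformation merely transports branches across the bridge $uv$, and $\tau$ is multiplicative over bridges; hence equality holds for $k\in\{0,1,n-1,n\}$. For $2\le k\le n-2$ I would force strictness by exhibiting one core forest with $P_u>P_v$ of positive weight feeding a positive $\Omega$-term, which is available under the stated hypotheses; in the intended application $H$ carries the girth cycle and is strictly richer than a path, ruling out the degenerate coincidence $G\cong G'$ that would otherwise arise when the $u$-side is a bare path of length exactly $t$.
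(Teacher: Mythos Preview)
Your reduction is sound and in fact coincides with the paper's setup: the natural edge bijection $E(G)\leftrightarrow E(G')$ (swapping $vx\leftrightarrow ux$ for $x\in N_G(v)\setminus\{u,v_1\}$) restricts to a bijection on spanning forests, and the comparison $\gamma_G-\gamma_{G'}=R_0\,\Sigma\,(P_u-P_v)$ when $uv\notin F_c$ is exactly what the paper derives (their $|V(T')|-a$ is your $P_u$, their $|V(S')|$ is your $P_v$, their $a$ plays the role of your $\Sigma$). So everything up through the formulation of $D(m)\ge 0$ matches.

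The gap is in how you prove $D(m)\ge 0$. Your generating-function argument is plausibly complete in the model case where the $u$-side is itself a bare path of length $s$, but the passage to an arbitrary $H$ is not justified. The sentence ``for a general $H$ one replaces $f_s,g_s$ by the $|C_{F_c}(u)|$-weighted and unweighted forest generating functions of $H$, which inherit the long-path factor, and the same domination persists'' is where the hypothesis $s\ge t$ must do real work, and you have not explained what ``inherit the long-path factor'' means or why the coefficientwise inequality survives. The forest polynomial of $H$ does not factor through that of the embedded path in any obvious way, so the telescoping/injection you sketch for paths has no evident analogue.

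The paper handles this differently and more directly. It partitions the forests of $G'$ with $uv\notin F'$ into $\mathcal F'_{20}\cup\mathcal F'_{21}\cup\mathcal F'_{22}$ according to the sign of $P_u-P_v$ (in your notation), and then builds an explicit injection $\varphi:\mathcal F'_{21}\to\mathcal F'_{22}$: given a ``bad'' forest with $P_u<P_v$, locate the first vertex $u_r^p$ along the path $u u_1^p\cdots u_s^p$ not in the $u$-component, adjoin the edge $u_{r-1}^pu_r^p$ to enlarge that component, and simultaneously recut the pendent path $vv_1\cdots v_{t}$ at position $P_u$ so that the new $v$-component has size exactly $P_u$. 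One checks that the two $\gamma$-differences cancel in each $\varphi$-pair, and that $\varphi$ is not surjective, which yields strict positivity. This argument uses only that $H$ \emph{contains} a path of length $s\ge t$ starting at $u$; nothing about the rest of $H$ enters, which is precisely why it works uniformly. If you want to rescue your approach, you would need to replace the hand-wave about general $H$ by an injection of this kind, at which point the generating-function machinery becomes unnecessary. Your strictness paragraph is also incomplete: exhibiting a single core forest with $P_u>P_v$ gives strictness for one value of $k$, not for the whole range $2\le k\le n-2$; the paper obtains strictness for every such $k$ from the non-surjectivity of $\varphi$.
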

 \begin{proof}
Clearly, if $k\in \{0,1,n\}$, $c_k(G)=c_k(G^\prime)$.
 Since $uv$ is a cut edge, then every spanning tree of $G$ and $G^\prime$
 includes edge $uv$,  which implies $\tau(G)=\tau(G^\prime)$. Hence
 $c_{n-1}(G)=c_{n-1}(G^\prime)$. Now assume that $2 \leq k \leq n-2$
and consider the coefficients $c_{n-k}(G)$. Let $\mathcal {F}^\prime$ (resp. $\mathcal {F}$)
be the set of all spanning forests of $G'$ (resp. $G$) with exactly $k$ components.
For an arbitrary spanning forest $F' \in \mathcal {F}'$, denote by $T'$  the component of $F'$ containing
$u$. Let $f: \mathcal{F}^\prime\rightarrow \mathcal{F}, F=f(F^\prime)$, where $V(F)=V(F^\prime)$ and
$$E(F)=E(F')-\{ux| x\in N_{T^\prime}(u)\cap N_{G}(v)\}+\{vx| x\in N_{T^\prime}(u)\cap N_{G}(v)\}.$$
Then $f$ is injective.  Let
$\mathcal{F}^\prime =\mathcal{F}_{(1)}^\prime \bigcup \mathcal{F}_{(2)}^\prime
$, where
$\mathcal{F}_{(1)}^\prime=\{F'\in \mathcal{F}^\prime \ |\ uv\in E(F')\} $ and
$\mathcal{F}_{(2)}^\prime=\{F'\in \mathcal{F}^\prime \ |\ uv\notin E(F')\} $.
If   $F^\prime\in \mathcal{F}_{(1)}^\prime,$ then  $F'$ and $F=f(F')$ have the same components except $T^\prime$.
Moreover, $T'$ and  $f(T^{\prime})$  have the same vertices. Hence $\gamma(F)=\gamma(F')$.  If $F^\prime\in \mathcal{F}_{(2)}^\prime,$
let $S^\prime$ be the component of $F'$ containing $v$.
Then $F^\prime$ and $F=f(F^\prime)$ have the same components except $T^\prime$ and $S^\prime$ in $F^\prime$.
Assume $T^\prime$ contains $a$ vertices in the component of $G-uv$
containing $v$, $|V(T^\prime)|-a$ vertices in the component of $G-uv$ containing $u$. Then $F$ has two components $f(T^{\prime})=T$ with $|V(T^\prime)|-a$ vertices and $f(S^{\prime})=S$ with $a+|V(S^\prime)|$  vertices  corresponding to $T^\prime$ and $S'$, respectively.  Denote by  $N$  the product of the orders of all components of $F^\prime$ except $T^\prime$ and $S^\prime$. Then
\begin{align*}
\gamma(f(F^{'}))-\gamma(F^{'})
=&[(|V(T^\prime)|-a)(a+|V(S^\prime)|)-|V(T^\prime)|\cdot|V(S^\prime)|]N\\
=&(|V(T^\prime)|-a-|V(S^\prime)|)\cdot a\cdot N.
\end{align*}
Further let $\mathcal{F}_{(2)}^\prime=
\mathcal{F}_{20}^\prime\cup\mathcal{F}_{21}^\prime\cup\mathcal{F}_{22}^\prime$,
where
\begin{eqnarray*}
\mathcal{F}_{20}^\prime&=&\{F^\prime\in \mathcal{F}_{(2)}^\prime\ |    \ |V(T^\prime)|-a=|V(S^\prime)| \mbox{ or } a=0\},\\
 \mathcal{F}_{21}^\prime&=& \{F^\prime\in \mathcal{F}_{(2)}^\prime\ | \  |V(T^\prime)|-a<|V(S^\prime)|, a>0\},\\
 \mathcal{F}_{22}^\prime&=&\{F^\prime\in \mathcal{F}_{(2)}^\prime\ |\   |V(T^\prime)|-a>|V(S^\prime)|, a>0\}.
 \end{eqnarray*}
 Hence it follows  that $$\forall F^\prime\in \mathcal{F}_{20}^\prime,  \gamma(f(F^{'}))-\gamma(F^{'})=0,$$
$$\forall F^\prime\in \mathcal{F}_{21}^\prime,  \gamma(f(F^{'}))-\gamma(F^{'})<0,$$
$$\forall F^\prime\in \mathcal{F}_{22}^\prime,  \gamma(f(F^{'}))-\gamma(F^{'})>0.$$
For every spanning forest $F_1^\prime\in \mathcal{F}_{21}^\prime$,
let $T^\prime$ and  $S^\prime$ be two components  of $F_1^\prime$  containing $u$ and $ v$, respectively.  Assume that
$ u,  u_1^p, \cdots, u_{r-1}^p\in V(T')$ and $u_r^p\notin V(T')$.
Moreover, let  $R^\prime$ be  a component of $F_1^\prime$ containing $u_r^p$ with  $b$ vertices.
Thus let $T^{\prime\prime}$ be a tree obtained from $T^\prime$ and $R^\prime$ by joining
$u_{r-1}^p$ and $u_r^p$ with the edge $u_{r-1}^pu_r^p$,   $S^{\prime\prime}$ be
the path $vv_1\cdots v_{|V(T^\prime)|-a-1}$ and $R^{\prime\prime}$ be
the path $v_{|V(T^\prime)|-a}\cdots v_{|V(S^\prime)|-1}$.
Then $F_2^\prime=(F_1^\prime-\{T^\prime, S^\prime, R^\prime\})\cup \{T^{\prime\prime},
S^{\prime\prime}, R^{\prime\prime}\}$  is a spanning forest
of $G^\prime$ with exactly $k$ components  and
$F_2^\prime\in \mathcal{F}_{22}^\prime$.
Hence there exists an injective (not bijective) map from $\mathcal{F}_{21}^\prime$ to $ \mathcal{F}_{22}^\prime$, i.e.,
$$\varphi: \mathcal{F}_{21}^\prime\rightarrow \mathcal{F}_{22}^\prime: F_1^{\prime}\rightarrow F_2^{\prime}=\varphi(F_1^\prime),$$
where $F_2^{\prime}=\varphi(F_1^\prime)=(F_1^\prime-\{T^\prime, S^\prime, R^\prime\})\cup \{T^{\prime\prime}, S^{\prime\prime}, R^{\prime\prime}\}$.
Note that  $|V(T^{\prime\prime})|=|V(T^\prime)|+b,$  $|V(S^{\prime\prime})|=|V(T^\prime)|-a,$ and $ |V(R^{\prime\prime})|= |V(S^\prime)|-|V(T^\prime)|+a$. It is easy to see that for $F^\prime\in\mathcal{F}_{21}^\prime$,
\begin{eqnarray*}
&&\gamma(f(\varphi(F^\prime)))-\gamma(\varphi(F^\prime))\\
&=& [(|V(T^\prime)|+b-a)\cdot (|V(T^\prime)|-a+a)-(|V(T^\prime)|+b)\cdot (|V(T^\prime)|-a)]\\
&& \cdot (|V(S^\prime)|-|V(T^\prime)|+a)\cdot \frac{N}{b}
\\
&=& -(|V(T^\prime)|-a-|V(S^\prime)|)\cdot a N
\\
&=&-(\gamma(f(F^\prime))-\gamma(F^\prime))
\end{eqnarray*}
Therefore,
\begin{align*}
&\sum_{F^{\prime}\in \mathcal {F}_{21}^{\prime} \cup  \mathcal {F}_{22}^\prime} [ \gamma(f(F^{\prime}))-\gamma(F^{\prime})]\\
=&\sum_{F^{\prime}\in \mathcal {F}_{21}^{\prime}} [\gamma(f(F^{\prime}))-\gamma(F^{\prime})+\gamma(f(\varphi(F^\prime)))-\gamma(\varphi(F^\prime))]+\sum_{F^\prime\in \mathcal {F}_{22}^{\prime}\setminus
\varphi(\mathcal {F}_{21}^{\prime})} [\gamma(f(F^{\prime}))-\gamma(F^{\prime})]\\
=&\sum_{F^\prime\in \mathcal {F}_{22}^{\prime}\setminus
\varphi(\mathcal {F}_{21}^{\prime})} [\gamma(f(F^{\prime}))-\gamma(F^{\prime})]>0.
\end{align*}
It follows from  Theorem~\ref{theorem1.1} that
$$c_{n-k}(G^{\prime})=\sum_{F^{\prime}\in \mathcal {F}^{\prime}}\gamma(F^{\prime}) <
\sum_{F^{\prime}\in \mathcal {F}^{\prime}}\gamma(f(F^{\prime}))\leq \sum_{F\in \mathcal {F}}\gamma(F)=c_{n-k}(G).$$
So the assertion holds.\end{proof}

%\begin{definition}\label{definition2.4}
%For $G= C_{T_1,\cdots,T_g}\in \mathcal{U}_{(n,g,l)}$, if $T_1,\cdots,T_g$ have the forms $BST(n_i,l_i,p_i)$, $(1\leq i\leq g)$,
%whose roots are identical to the vertices on the cycle $C_g$, and at most one of $p_1, p_2, \cdots, p_g$ is positive,
%then denote $C_{T_1,\cdots,T_g}$ by $C_{B_1,\cdots,B_g}$.
%\end{definition}

Now we are ready to present the main result in this section.
\begin{theorem}\label{theorem2.5}
 If $C_{T_1,\cdots,T_g}\in \mathcal{U}_{n,l}^g$ is obtained from the cycle $C_g=u_1\cdots u_g$ by attaching $g$ trees $T_1, \cdots, T_g$
at the roots $u_1, \cdots, u_g$, respectively, where  $|V(T_i)|=n_i$ and the number of leaves in $T_i$ is $l_i$,  then
$$C_{B_1,\cdots,B_g}\preceq C_{T_1,\cdots,T_g},$$
where  $B_i$ is a tree with root $u_i$ obtained by identifying one end $u_{i,p_i}$ of the path $P_{p_i}: u_iu_{i,2}\cdots u_{i,p_i}$ and the center of $BST_{n_i-p_i+1, l_i}$  for $i=1, \cdots, g$ and $p_1, \cdots, p_g$ are equal to 1 except at most one $p_j$. Moreover,
the equality holds  if and only if $C_{B_1,\cdots,B_g}\cong C_{T_1,\cdots,T_g}$.
\end{theorem}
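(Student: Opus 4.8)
The plan is to drive an arbitrary $C_{T_1,\cdots,T_g}$ into the asserted normal form $C_{B_1,\cdots,B_g}$ by a finite sequence of the two coefficient-monotone operations already in hand: the $\xi$-transformation of Lemma~\ref{lemma2.3} and the path-balancing operation of Lemma~\ref{theorem2.1}. Both fix the cycle $C_g$ and act only inside the attached trees; since the $\xi$-transformation preserves the number of leaves (as noted after Definition~\ref{definition 2.2}) and path-balancing clearly does, every intermediate graph stays in $\mathcal{U}_{n,l}^g$ with the prescribed data $(n_i,l_i)$ on each root $u_i$ untouched. Each operation can only decrease the Laplacian coefficients, so it suffices to show that, modulo these operations, $C_{T_1,\cdots,T_g}$ can be steered to the target form, and then to read off the equality case from the equality cases of the two lemmas. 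Trees with $l_i\le 1$ are already in the required shape (a single vertex, or a pendant path $P_{n_i}$ for which the value of $p_i$ is immaterial), so the work concerns the trees with $l_i\ge 2$.

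The first stage reduces each $T_i$ to a single-branch-vertex tree. Whenever $T_i$ still has a branch vertex $v$ lying off a single path issuing from $u_i$, I choose the cut edge $uv$ pointing back toward $u_i$ and apply the $\xi$-transformation of Lemma~\ref{lemma2.3}, retaining the \emph{shortest} pendant path at $v$ so that the parameter $t$ is as small as possible. The long path $P_{s+1}$ demanded on the $u$-side is furnished by the remaining edges of $T_i$ together with the cycle and the other attached trees; ordering the transformations so that branching is always pushed toward $u_i$ makes $s\ge t$ hold, and by Lemma~\ref{lemma2.3} each step strictly decreases $c_k$ for $2\le k\le n-2$. After finitely many steps $T_i$ is a starlike tree whose center $c_i$ sits at the end of a (possibly trivial) path $P_{p_i}=u_iu_{i,2}\cdots u_{i,p_i}$ attached at $u_i$. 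The second stage balances the legs of this spider: Lemma~\ref{theorem2.1} states that replacing two pendant paths of lengths $p\ge q$ at a common vertex by lengths $p+1,q-1$ can only increase the coefficients, so read in reverse it shows that equalizing two legs at $c_i$ can only decrease them. Repeatedly equalizing the two most unequal legs makes them differ by at most one, so $T_i$ becomes $BST_{n_i-p_i+1,l_i}$ mounted on the path $P_{p_i}$, which is precisely $B_i$ for some $p_i\ge 1$.

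It remains to force all but one of the handles $P_{p_i}$ to be trivial, and this is where the real difficulty lies. Suppose $p_i>1$ and $p_j>1$ simultaneously. To flatten $B_j$, I apply the $\xi$-transformation sliding $c_j=u_{j,p_j}$ one step toward $u_j$ along its handle, again keeping a shortest leg of length $t\approx\lfloor (n_j-p_j)/l_j\rfloor$. The component of $G-uv$ on the $u$-side now contains all of $B_i$, so running up the nontrivial handle of $B_i$ and out along one of its legs supplies a path $P_{s+1}$ of length at least $(p_i-1)+\lfloor (n_i-p_i)/l_i\rfloor$; once this meets or exceeds $t$, the hypothesis $s\ge t$ holds and the step decreases every middle coefficient. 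Iterating, and re-balancing by Lemma~\ref{theorem2.1} after each slide, drives $p_j$ down to $1$; the handle kept to the very end is the one whose balanced starlike part is richest in vertices along its legs, since it must repeatedly supply the long $u$-side path. The main obstacle is thus the systematic verification of the condition $s\ge t$ of Lemma~\ref{lemma2.3} at each $\xi$-step: one must sequence the transformations (flattening the trees with short legs first, retaining the richest handle until last) so that a path of the required length is always present on the $u$-side, and argue that at most one handle can fail to flatten.

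Finally, the equality analysis is immediate once the normal form is reached. Whenever an $\xi$-transformation is genuinely applicable its hypothesis $d_G(v)\ge 3$ forces it to move at least one edge, so by Lemma~\ref{lemma2.3} it strictly decreases $c_k$ for some $2\le k\le n-2$; likewise every genuine balancing step strictly decreases some $c_k$ by Lemma~\ref{theorem2.1}. Hence if any nontrivial step is performed then $C_{B_1,\cdots,B_g}\prec C_{T_1,\cdots,T_g}$, and no step is performed exactly when $C_{T_1,\cdots,T_g}$ already sits in the normal form. Therefore $C_{B_1,\cdots,B_g}\preceq C_{T_1,\cdots,T_g}$ holds in all cases, with equality for every $k$ if and only if $C_{B_1,\cdots,B_g}\cong C_{T_1,\cdots,T_g}$, as claimed.
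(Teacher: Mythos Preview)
Your overall plan---reduce by repeated $\xi$-transformations and then balance via Lemma~\ref{theorem2.1}---is the same as the paper's, but the proposal contains a genuine gap that you yourself flag: you never actually verify the hypothesis $s\ge t$ of Lemma~\ref{lemma2.3} at each step. In your Stage~1 the sentence ``ordering the transformations so that branching is always pushed toward $u_i$ makes $s\ge t$ hold'' is an assertion, not an argument; and your Stage~3 is explicitly left as ``the main obstacle.'' Without this, none of the $\xi$-steps are justified, so the chain of inequalities does not follow.

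The paper avoids this difficulty by a simple asymmetric choice you did not make: before doing anything, it singles out the tree $T_1$ that contains the longest path among all paths issuing from a root $u_i$. It then processes $T_2,\dots,T_g$ first, always pushing the \emph{farthest} branch vertex $v$ one step toward $u_i$. At every such step the $u$-side of $G-uv$ contains the cycle and all of (the untouched) $T_1$; the path from $u$ back to $u_i$, around the cycle to $u_1$, and out along the longest path of $T_1$ has length at least the pendant-path length $t$ at $v$, since by the choice of $T_1$ that longest path already dominates the entire path $u_i\cdots v\cdots v_t$ inside $T_i$. Hence $s\ge t$ holds automatically and one drives each $T_i$, $i\ge 2$, all the way to $BST_{n_i,l_i}$ centered at $u_i$; that is, $p_i=1$ for $i\ge 2$ comes for free, and your ``handle-flattening'' Stage~3 is never needed. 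Only $T_1$ then requires a separate two-phase treatment (first pushing branching toward $u_1$, then away along the longest path of $T_1$) to leave a single branch vertex, and there the needed long path lives inside $T_1$ itself. Reorganizing your argument around this initial selection of $T_1$ closes the gap.
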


\begin{proof}
Denote $G= C_{T_1,\cdots,T_g}$. We prove this result by two steps.

{\bf Step 1:} Let $P$ be  the longest path among all paths which start at $u_i$ in $T_i$ for $i=1, \cdots, g$. Without loss of generality, assume
$P$ belongs to $T_1$. Let $v$ be a farthest branch vertex from vertex $u_i$ in $T_i$ for $i=2, \cdots, g$. Thus there exists a cut edge $uv$ in $T_i$ such that $d(u_i,u)+1=d(u_i,v)$. By  $\xi$-transformation on $uv$ and Lemma~\ref{lemma2.3}, we obtain
$G_1=\xi(G,uv)=C_{T_1,T_2, \cdots, T_i', \cdots, T_g}\preceq G$ such that the number of branch vertices in $T_i$ is non-increasing. Hence after a series of $\xi$-transformations
on the cut edge $xy$ with $y$ being a branch vertex and $d(u_i, x)+1=d(u_i, y)$ in $T_i'$,
$G_2=C_{T_1, T_2, \cdots, T_i^{\prime,\prime}, \cdots, T_g}$ is obtained,
where $T_i^{\prime,\prime}$ is a starlike tree of order $n_i$ and leaves $l_i$ with root $u_i$.
Hence  for $i=2, \cdots, g$, after performing  a series of $\xi$-transformations and repeatedly using Lemma~\ref{theorem2.1}, there exists an unicyclic graph $G_3=C_{T_1, B_2, \cdots, B_g} $ such that  $G_3\preceq G_1$  where $B_i$ is $ BST_{n_i, l_i}$ with a center vertex $u_i$ for $i=2, \cdots, g$.

{\bf Step 2:}  If $T_1$ is a path or a starlike tree of order $n_1$, then the assertion holds by using Lemma~\ref{theorem2.1}.
Assume that $T_1$ has at least one branch vertex except the root $u_1$.
Let $v$ be the branch vertex which is nearest to some pendent vertices in $T_1$ (maybe $v$ is not unique).
Then there exists a cut edge $uv$, $d(u_1,u)+1=d(u_1,v)$.
If there exists $uv$ as defined above which satisfies the conditions of Lemma~\ref{lemma2.3},
then $G_4=\xi(G_3,uv)\preceq G_3$.
Moreover, the number of branch vertices in $G_4$ is no more than that in $G_3$.
After performing a series of this type of $\xi$-transformations and repeatedly using Lemma~\ref{theorem2.1},
there exists an unicyclic graph $G_5=C_{T_1^\prime, B_2, \cdots, B_g} $ such that $G_5\preceq G_4$, where $T_1^\prime$ is a tree rooted at
$u_1$ obtained by attaching $l_1-1$ pendent paths at some vertices of the longest path $P^\prime$ of $T_1^\prime$.
If all pendent paths are attached at the only one vertex $u_1$ or $u_{1,x}$ of $P^\prime$, then the result holds.

Otherwise, let $v^\prime$ be the branch vertex which is nearest to $u_1$ in $T_1^\prime$ (when $d_{G_5}(u_1)> 3$, $v^\prime=u_1$).
Then there exists a cut edge $u^\prime v^\prime$ which satisfies $d(u_1,v^\prime)+1=d(u_1,u^\prime)$.
By $\xi$-transformation on $u^\prime v^\prime$ and Lemma~\ref{lemma2.3},
we obtain $G_6=\xi(G_5,u^\prime v^\prime)\preceq G_5$.
Further, the number of branch vertices in $G_6$ is no more than that in $G_5$.
Hence by performing a series of this type of $\xi$-transformations,
$G_7=C_{B_1, \cdots, B_g}$ is obtained, where $B_1$ has exactly one branch vertex $u_1$ or $B_1$ has exactly one branch vertex $u\neq u_1$ with $d_{G_7}(u_1)=3$.  If $u=u_1$, then by Lemma~\ref{theorem2.1}, the assertion holds. If $u\neq u_1$ and $d_{G_7}(u_1)=3$, then applying Lemma~\ref{theorem2.1} to all pendent paths in $G_7$ yields the desired result.
\end{proof}

\begin{corollary}\label{cor2.6}
  Let $C_{T_1,\cdots,T_g}\in \mathcal{U}_{n,l}^g$ be obtained from the cycle $C_g=u_1\cdots u_g$ by attaching $g$ trees $T_1, \cdots, T_g$
  at the roots $u_1, \cdots, u_g$, respectively. If  $d(u_1)> 3$ and $|V(T_i)|=1$  for $i=2, \cdots, g$, then
$$C_{B_1,\cdots,B_g}\preceq C_{T_1,\cdots,T_g},$$
where  $B_1$ is  $BST_{n-g+1, l}$  with a center vertex $u_1$ and $|V(B_i)|=1$ for $i=2, \cdots, g.$  Moreover,
the equality holds  if and only if $C_{B_1,\cdots,B_g}\cong C_{T_1,\cdots,T_g}$.
\end{corollary}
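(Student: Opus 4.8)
The plan is to specialize Theorem~\ref{theorem2.5} to the present situation, in which only $T_1$ is nontrivial. First I would record the reduction. Since $|V(T_i)|=1$ for $i=2,\dots,g$, each of these trees is a single vertex carrying no leaf, so all $l$ leaves lie in $T_1$; as $\sum_i n_i=n$ this forces $n_1=n-g+1$ and $l_1=l$, with $n_i=1,\ l_i=0$ for $i\ge 2$. I would also rewrite the degree hypothesis: since $u_1$ has two cycle-neighbours, $d(u_1)>3$ is equivalent to $d_{T_1}(u_1)\ge 2$, i.e.\ $u_1$ is a branch vertex of $T_1$. In particular each of its $\ge 2$ subtrees contributes a leaf, so $l\ge 2$ and the target $BST_{n-g+1,l}$ centred at $u_1$ is a genuine starlike tree.

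Applying Theorem~\ref{theorem2.5} then produces trees $B_i$ of the form ``a path $P_{p_i}$ ending at the centre of $BST_{n_i-p_i+1,l_i}$'', with every $p_i=1$ except at most one. For $i\ge 2$ the equality $n_i=1$ forces $p_i=1$ and collapses $B_i$ to the single vertex $u_i$, so the only index that could carry a nontrivial path prefix is $i=1$. Hence it suffices to prove that here $p_1=1$, which says precisely that the balanced starlike tree is centred at $u_1$ rather than attached to it through a path.

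To obtain $p_1=1$ I would follow Step~2 of the proof of Theorem~\ref{theorem2.5} and use $d_{T_1}(u_1)\ge 2$ to remain in its first alternative. The consolidating $\xi$-transformations push each branch of $T_1$ one step toward $u_1$; in each such move $u_1$, or a vertex on the path toward it, lies on the receiver's side together with the longest pendant path emanating from $u_1$, so the requirement $s\ge t$ of Lemma~\ref{lemma2.3} is met and the move weakly lowers every coefficient. Crucially $u_1$ always plays the role of receiver, never of donor, so its degree never decreases: it stays a branch vertex and the process ends with all $l$ pendant paths attached at $u_1$. This is the ``branch vertex $=u_1$'' outcome of that proof, as opposed to the ``$d(u_1)=3$'' outcome, which is the only one yielding a path prefix and is excluded here because $d(u_1)>3$ is preserved. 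A final use of Lemma~\ref{theorem2.1} balances the path lengths, giving $B_1=BST_{n-g+1,l}$ centred at $u_1$.

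The step I expect to be the main obstacle is checking that every consolidating move genuinely satisfies the hypothesis of Lemma~\ref{lemma2.3}, i.e.\ that the receiver's component always retains a path at least as long as the pendant path being moved, so that the ``off-$u_1$'' transformation of the general argument is never forced. The delicate case is a branch lying inside the subtree of $u_1$ that already contains the longest path; there one must process the branch vertices nearest the leaves first and keep one longest path fixed through a second subtree of $u_1$ (available since $d_{T_1}(u_1)\ge 2$) and through the girth-$g$ cycle, so that the receiver's side always offers enough length. Once this is secured, each transformation is a strict $\prec$ unless it changes nothing, so tracing equality back through the moves shows $C_{B_1,\dots,B_g}\cong C_{T_1,\dots,T_g}$ exactly when the original graph already equals the target, which is the asserted equality case.
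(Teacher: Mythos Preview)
Your proposal is correct and follows essentially the same approach as the paper: the paper's entire proof of Corollary~\ref{cor2.6} is the single sentence ``It is obvious that the assertion follows from the proof of Theorem~\ref{theorem2.5}.'' You have unpacked this by specializing Theorem~\ref{theorem2.5} to the case $n_i=1$ for $i\ge 2$ and explaining why the hypothesis $d(u_1)>3$ forces the outcome $p_1=1$ (the branch vertex ends up at $u_1$ rather than at some $u_{1,x}$ with $d(u_1)=3$), which is exactly the content one must extract from that proof; your identification of the delicate point---verifying the $s\ge t$ hypothesis of Lemma~\ref{lemma2.3} along the way so that the ``off-$u_1$'' alternative is never invoked---is appropriate and goes somewhat beyond what the paper spells out.
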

\begin{proof}
It is obvious that the assertion  follows from the proof of Theorem~\ref{theorem2.5}.\end{proof}

\section{The minimal elements in two  subsets of $\mathcal{U}_{n,l}^g$}
In this section, we characterize all extremal graphs which have minimal Laplacian coefficients in the following two special subsets of $\mathcal{U}_{n,l}^g$. Denote
\begin{eqnarray*}
\mathcal{U}_{n,l}^{g,1}&=&\{C_{T_1,\cdots,T_g}| \  |V(T_i)|=1\  \mbox{ for}\  i=2, \cdots,  g\},\\
\mathcal{U}_{n,l}^{g,2}&=&\{C_{T_1,\cdots,T_g}| \  |V(T_1)|>1, |V(T_i)|>1, |V(T_j)|=1 \ \mbox{for }   j\neq 1, i \}.
\end{eqnarray*}
Clearly, $U_{n, l}^{g, p}$ is in  $\mathcal{U}_{n,l}^{g,1}$. For convenience, denote $U_{n,l}^{g,0}=U^0, U_{n,l}^{g,1}=U^1, \cdots, U_{n,l}^{g,p}=U^p$.

\begin{lemma}\label{lemma3.1}
For $1\le p\le \lfloor\frac{n-g-gl+l}{l+1}\rfloor$,  $U^p$ and $U^{p-1}$ are incomparable
 in the poset  $(\mathcal{U}_{n,l}^{g}, \preceq)$. (See Fig.1).
\end{lemma}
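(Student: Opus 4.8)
The plan is to establish incomparability by exhibiting two Laplacian coefficients that move in opposite directions as $p$ grows, since $U^p\preceq U^{p-1}$ would force $c_k(U^p)\le c_k(U^{p-1})$ for every $k$, and $U^{p-1}\preceq U^p$ the reverse. First I would record which coefficients cannot help: every $U^p$ has the same order $n$, the same size $m=n$, and exactly $g$ spanning trees (one edge removed from the $g$-cycle), so $c_0,c_1,c_{n-1},c_n$ all agree; moreover for $p\ge 1$ the degree sequence of $U^p$ (one vertex of degree $3$, one of degree $l+1$, $l$ leaves, and $n-l-2$ vertices of degree $2$) is independent of $p$, so $c_2$ agrees as well. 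Hence the separation must come from a genuinely higher-resolution pair, and the natural candidates are a ``top'' coefficient $c_{n-2}$ and a ``bottom'' coefficient such as $c_3$ (the first index at which $U^p$ and $U^{p-1}$ can differ).

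The key observation is that $U^p$ and $U^{p-1}$ differ only in the tree $T=T_1$ hanging at the cycle vertex $u_1$; the cycle $C_g$ and the $g-1$ remaining (trivial) attached trees are common. I would therefore evaluate $c_{n-2}$ via Theorem~\ref{theorem1.1}, splitting each $2$-component spanning forest according to how many of its two deleted edges lie on $C_g$. Deleting two cycle edges contributes a term depending on $T$ only through $|V(T)|$, which is identical for both graphs and cancels; deleting one cycle edge and one edge $e$ of $T$ contributes $g\,s_e(n-s_e)$, where $s_e$ is the number of vertices of $T$ cut off from $u_1$ by $e$. Writing $\Phi(T)=\sum_{e\in E(T)}s_e(n-s_e)$ gives the clean identity
\begin{equation*}
c_{n-2}(U^p)-c_{n-2}(U^{p-1})=g\bigl[\Phi(T^{(p)})-\Phi(T^{(p-1)})\bigr].
\end{equation*}
Passing from $T^{(p-1)}$ to $T^{(p)}$ lengthens the central path by one edge and shortens the longest branch of the balanced star by one, and the surviving path edges keep the same $s$-values, so the difference telescopes to
\begin{equation*}
\Phi(T^{(p)})-\Phi(T^{(p-1)})=(n-g-p+1)(g+p-1)-b(n-b),\qquad b=\Bigl\lceil\tfrac{n-p-g+1}{l}\Bigr\rceil,
\end{equation*}
the first term from the new path edge and the second from the removed branch edge. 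Factoring, this equals $-g\,\bigl(b-(g+p-1)\bigr)\bigl(n-b-(g+p-1)\bigr)$, and the upper bound $p\le\lfloor\frac{n-g-gl+l}{l+1}\rfloor$ is exactly the hypothesis that forces $b>g+p-1$, so that $c_{n-2}(U^p)<c_{n-2}(U^{p-1})$ strictly. This rules out $U^{p-1}\preceq U^p$.

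For the reverse inequality I would compute a second coefficient by the same forest/subgraph bookkeeping and show that it strictly increases with $p$ over the whole range. Applying Theorem~\ref{theorem1.1} at $n-3$ components (delete three edges, classified by how many lie on $C_g$) reduces $c_{n-3}(U^p)-c_{n-3}(U^{p-1})$ to differences of the rooted tree functionals $\sum_e s_e$, $\sum_e s_e^2$ and $\sum_{\{e,e'\}}(\cdots)$ of $T^{(p)}$ against $T^{(p-1)}$; alternatively one may use the subgraph expansion of $c_3$ in terms of the numbers of $K_{1,3}$, $P_4$, $P_3\cup K_2$ and $3K_2$, where the star count is pinned down by the degrees and only the path/branch contributions move. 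Either route aims at a strict inequality $c_k(U^p)>c_k(U^{p-1})$ for the chosen $k$, which together with the previous paragraph yields incomparability.

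The main obstacle is precisely this second coefficient: unlike $c_{n-2}$, it does not collapse to a single telescoping term, so one must control several competing tree functionals simultaneously and check that their combined sign stays positive for every $p$ in the range --- again invoking the bound on $p$ to prevent a sign flip. As a guide and a consistency check, the boundary case $n=10$, $l=2$, $g=3$, $p=1$ is exactly the pair $G_1=U^0$, $G_2=U^1$ displayed in the Introduction: there $c_{n-2}$ decreases ($382>373$) while every lower coefficient increases, which is the sign pattern the general argument must reproduce.
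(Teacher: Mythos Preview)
Your treatment of $c_{n-2}$ is correct and is essentially the paper's own argument: both classify the $2$-component spanning forests by which edges are deleted, observe that only the case $e_2=w_{p-1}w_p$ contributes, and reduce the sign to the single product $(g+p-1)(n-g-p+1)-b(n-b)$ with $b=\lfloor\frac{n-g-p}{l}\rfloor+1$, negative precisely under the stated bound on $p$. (Minor slip: the factored expression $-g(b-(g+p-1))(n-b-(g+p-1))$ is the value of $c_{n-2}(U^p)-c_{n-2}(U^{p-1})$, not of $\Phi(T^{(p)})-\Phi(T^{(p-1)})$; the extra $g$ belongs outside $\Phi$.)

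The second half has a real gap. Your proposed witness $c_3$ cannot work once $p\ge 2$: by Newton's identities $c_3$ is a polynomial in $\mathrm{tr}(L)$, $\mathrm{tr}(L^2)$, $\mathrm{tr}(L^3)$, and these are fixed by the edge count, the degree sequence, and the number of triangles. For $p\ge 2$ the graphs $U^p$ and $U^{p-1}$ share all three of these data, so $c_3(U^p)=c_3(U^{p-1})$ exactly, and no separation is possible there. More broadly, the index $m$ at which a strict inequality $c_m(U^p)>c_m(U^{p-1})$ can be exhibited \emph{shifts with $p$}; no single fixed index (neither $m=3$ nor any other absolute constant) can serve for all $p$ in the range. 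The alternative $c_{n-3}$ route you mention is left entirely unexecuted.

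The paper handles this by noting $U^{p-1}=\xi(U^p,w_{p-1}w_p)$ and rerunning the forest-pairing machinery from the proof of Lemma~\ref{lemma2.3}. The blanket hypothesis $s\ge t$ of that lemma fails here, but for each $m\le 2(p+g)-3$ the injection $\varphi:\mathcal F_{21}'\to\mathcal F_{22}'$ is still well defined (any $F'\in\mathcal F_{21}'$ with $m$ edges cannot contain the whole path $P_{p+g-1}$, else an edge count forces $m>2(p+g)-3$), yielding $c_m(U^{p-1})\le c_m(U^p)$ on that window. For $2p\le m\le 2(p+g)-3$ one then exhibits an explicit forest $\bar F_2'\in\mathcal F_{22}'\setminus\varphi(\mathcal F_{21}')$, so $\varphi$ is not surjective and the inequality is strict. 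Thus the second witness is any $m$ in the $p$-dependent interval $[2p,\,2(p+g)-3]$, and this is exactly what your outline is missing.
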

\begin{proof}
   We first show that $c_{n-2}(U^p)<c_{n-2}(U^{p-1})$.
It is obvious that $U^{p-1}$ can be regarded as $U^{p-1}=\xi(U^p, w_{p-1}w_p)$.
For convenience, we denote $U^p$ and $U^{p-1}$ by $G$ and $G^\prime$, respectively.
Let $\mathcal{F}_2$ (resp. $\mathcal{F}_2^{\prime}$) be the set of all spanning forests of $G$ (resp. $G^\prime$)
with exactly $2$ components. For an arbitrary spanning forest $F\in \mathcal{F}_{2}$
(resp. $F^\prime\in \mathcal{F}_{2}^\prime$), $F$ (resp. $F^\prime$) can be obtained by
deleting two edges $\{e_1, e_2\}$ in $E(G)$ (resp. $E(G^\prime)$) with $e_1$ belonging to the cycle in $G$ (resp. $G^\prime$).
If $e_2\neq w_{p-1}w_p$, then $F$ and $F^\prime$ have the same components, which implies $\gamma(F)=\gamma(F^\prime)$.
If $e_2= w_{p-1}w_p$, then
$$\gamma(F)-\gamma(F^\prime)=(g+p-1)(n-g-p+1)-(\lfloor\frac{n-g-p}{l}\rfloor+1)(n-\lfloor\frac{n-g-p}{l}\rfloor-1)<0,$$
Therefore, $$c_{n-2}(G)-c_{n-2}(G^\prime)=\sum _{F\in \mathcal{F}_2}\gamma (F)
-\sum _{F^\prime\in \mathcal{F}_2^{\prime}}\gamma (F^\prime)<0.$$
We  next show $c_{m}(G)> c_{m}(G^\prime)$ for $2p\leq m\leq 2(p+g)-3$.
 Clearly $P_{p+g-1}=w_{p-1}w_{p-2}\cdots w_1u_1u_2\cdots u_g$ (denote $w_i=u_{-i+1}, 1\leq i\leq p-1$ for convenience)
is the longest path of order $p+g-1$ in the component of $G-w_{p-1}w_p$.
Let $\mathcal{F}^{\prime}$ (resp. $\mathcal{F}$) be the set of all spanning forests of $G^\prime$ (resp. $G$)
with exactly $n-m$ components, in other words, $\mathcal{F}^{\prime}$ (resp. $\mathcal{F}$) is the set of
all spanning forests of $G^\prime$ (resp. $G$) with exactly $m$ edges.
For an arbitrary spanning forest $F^\prime\in \mathcal{F}^\prime$, denote by $T'$  the component of $F'$ containing
$w_{p-1}$. Let $f: \mathcal{F}^\prime\rightarrow \mathcal{F}, F=f(F^\prime)$, where $V(F)=V(F^\prime)$ and
$$E(F)=E(F')-\{w_{p-1}x| x\in N_{T^\prime}(w_{p-1})\cap N_{G}(w_p)\}+\{w_px| x\in N_{T^\prime}(w_{p-1})\cap N_{G}(w_p)\}.$$
Then $f$ is injective.  Let
$\mathcal{F}^\prime =\mathcal{F}_{(1)}^\prime \bigcup \mathcal{F}_{(2)}^\prime$, where
$\mathcal{F}_{(1)}^\prime=\{F'\in \mathcal{F}^\prime \ |\ w_{p-1}w_p\in E(F')\} $ and
$\mathcal{F}_{(2)}^\prime=\{F'\in \mathcal{F}^\prime \ |\ w_{p-1}w_p\not\in E(F')\} $.
If   $F^\prime\in \mathcal{F}_{(1)}^\prime,$ then  $F'$ and $F=f(F')$ have the same components except $T^\prime$.
Moreover, $T'$ and  $f(T^{\prime})$  have the same vertices. Hence $\gamma(F)=\gamma(F')$.  If $F^\prime\in \mathcal{F}_{(2)}^\prime,$
let $S^\prime$ be the component of $F'$ containing $w_p$.
Then $F^\prime$ and $F=f(F^\prime)$ have the same components except $T^\prime$ and $S^\prime$ in $F^\prime$.
Assume $T^\prime$ contains $a$ vertices in the component of $G-w_{p-1}w_p$
containing $w_p$, $|V(T^\prime)|-a$ vertices in the component of $G-w_{p-1}w_p$ containing $w_{p-1}$. Then $F$ has two components $f(T^{\prime})=T$ with $|V(T^\prime)|-a$ vertices and $f(S^{\prime})=S$ with $a+|V(S^\prime)|$  vertices  corresponding to $T^\prime$ and $S'$, respectively.  Denote by  $N$  the product of the orders of all components of $F^\prime$ except $T^\prime$ and $S^\prime$. Then
\begin{align*}
\gamma(f(F^{'}))-\gamma(F^{'})
=&[(|V(T^\prime)|-a)(a+|V(S^\prime)|)-|V(T^\prime)|\cdot|V(S^\prime)|]N\\
=&(|V(T^\prime)|-a-|V(S^\prime)|)\cdot a\cdot N.
\end{align*}
Further let $\mathcal{F}_{(2)}^\prime=
\mathcal{F}_{20}^\prime\cup\mathcal{F}_{21}^\prime\cup\mathcal{F}_{22}^\prime$,
where
\begin{eqnarray*}
\mathcal{F}_{20}^\prime&=&\{F^\prime\in \mathcal{F}_{(2)}^\prime\ |    \ |V(T^\prime)|-a=|V(S^\prime)| \mbox{ or } a=0\},\\
 \mathcal{F}_{21}^\prime&=& \{F^\prime\in \mathcal{F}_{(2)}^\prime\ | \  |V(T^\prime)|-a<|V(S^\prime)|, a>0\},\\
 \mathcal{F}_{22}^\prime&=&\{F^\prime\in \mathcal{F}_{(2)}^\prime\ |\   |V(T^\prime)|-a>|V(S^\prime)|, a>0\}.
 \end{eqnarray*}
 Hence it follows  that $$\forall F^\prime\in \mathcal{F}_{20}^\prime,  \gamma(f(F^{'}))-\gamma(F^{'})=0,$$
$$\forall F^\prime\in \mathcal{F}_{21}^\prime,  \gamma(f(F^{'}))-\gamma(F^{'})<0,$$
$$\forall F^\prime\in \mathcal{F}_{22}^\prime,  \gamma(f(F^{'}))-\gamma(F^{'})>0.$$
For every spanning forest $F_1^\prime\in \mathcal{F}_{21}^\prime$,
let $T^\prime$ and  $S^\prime$ be two components  of $F_1^\prime$  containing $w_{p-1}$ and $w_p$, respectively.
Then  $T^\prime$ does not contain all the vertices of the path $P_{p+g-1}$.
 In fact, if all the vertices of $P_{p+g-1}$ belong to $T^\prime$,
then by the definition of $\mathcal{F}_{21}^\prime$, we have
$|V(T^\prime)|=p+g-1+a$, $|E(T^\prime)|\geq p+g-1$, and $|V(S^\prime)|>|V(T^\prime)|-a=p+g-1$,
$|E(S^\prime)|>p+g-2$, which implies $m\geq |E(T^\prime)|+|E(S^\prime)|> p+g-1+p+g-2=2(p+g)-3$. It is a contradiction to  $m\leq 2(p+g)-3$.
Therefore, assume that
$u_{-(p-1)+1}, u_{-(p-2)+1}, \cdots, u_{r-1}\in V(T')$ and $u_r\notin V(T')$.
Moreover, let  $R^\prime$ be  a component of $F_1^\prime$ containing $u_r$ with  $b$ vertices.
Thus let $T^{\prime\prime}$ be a tree obtained from $T^\prime$ and $R^\prime$ by joining
$u_{r-1}$ and $u_r$ with edge $u_{r-1}u_r$,   $S^{\prime\prime}$ be
the path $w_pv_1\cdots v_{|V(T^\prime)|-a-1}$ and $R^{\prime\prime}$ be
the path $v_{|V(T^\prime)|-a}\cdots v_{|V(S^\prime)|-1}$.
Then $F_2^\prime=(F_1^\prime-\{T^\prime, S^\prime, R^\prime\})\cup \{T^{\prime\prime},
S^{\prime\prime}, R^{\prime\prime}\}$  is a spanning forest
of $G^\prime$ with exactly $m$ edges and
$F_2^\prime\in \mathcal{F}_{22}^\prime$.
Hence there exists an injective map from $\mathcal{F}_{21}^\prime$ to $ \mathcal{F}_{22}^\prime$, i.e.,
$$\varphi: \mathcal{F}_{21}^\prime\rightarrow \mathcal{F}_{22}^\prime: F_1^{\prime}\rightarrow F_2^{\prime}=\varphi(F_1^\prime),$$
where $F_2^{\prime}=\varphi(F_1^\prime)=(F_1^\prime-\{T^\prime, S^\prime, R^\prime\})\cup \{T^{\prime\prime}, S^{\prime\prime}, R^{\prime\prime}\}$.
Note that $|V(T^{\prime\prime})|=|V(T^\prime)|+b,$  $|V(S^{\prime\prime})|=|V(T^\prime)|-a,$ and $ |V(R^{\prime\prime})|= |V(S^\prime)|-|V(T^\prime)|+a$. It is easy to see that for $F^\prime\in\mathcal{F}_{21}^\prime$,
\begin{eqnarray*}
&&\gamma(f(\varphi(F^\prime)))-\gamma(\varphi(F^\prime))\\
&=& [(|V(T^\prime)|+b-a)\cdot (|V(T^\prime)|-a+a)-(|V(T^\prime)|+b)\cdot (|V(T^\prime)|-a)]\\
&& \cdot (|V(S^\prime)|-|V(T^\prime)|+a)\cdot \frac{N}{b}
\\
&=& -(|V(T^\prime)|-a-|V(S^\prime)|)\cdot a N
\\
&=&-(\gamma(f(F^\prime))-\gamma(F^\prime))
\end{eqnarray*}
Therefore,
\begin{align*}
&\sum_{F^{\prime}\in \mathcal {F}_{21}^{\prime} \cup  \mathcal {F}_{22}^\prime} [ \gamma(f(F^{\prime}))-\gamma(F^{\prime})]\\
=&\sum_{F^{\prime}\in \mathcal {F}_{21}^{\prime}} [\gamma(f(F^{\prime}))-\gamma(F^{\prime})+\gamma(f(\varphi(F^\prime)))-\gamma(\varphi(F^\prime))]+\sum_{F^\prime\in \mathcal {F}_{22}^{\prime}\setminus
\varphi(\mathcal {F}_{21}^{\prime})} [\gamma(f(F^{\prime}))-\gamma(F^{\prime})]\\
=&\sum_{F^\prime\in \mathcal {F}_{22}^{\prime}\setminus
\varphi(\mathcal {F}_{21}^{\prime})} [\gamma(f(F^{\prime}))-\gamma(F^{\prime})]\geq 0.
\end{align*}
It follows from  Theorem~\ref{theorem1.1} that for $m\leq 2(p+g)-3$,
$$c_{m}(G^{\prime})=\sum_{F^{\prime}\in \mathcal {F}^{\prime}}\gamma(F^{\prime}) \leq
\sum_{F^{\prime}\in \mathcal {F}^{\prime}}\gamma(f(F^{\prime}))\leq \sum_{F\in \mathcal {F}}\gamma(F)=c_{m}(G).$$
Further we show that the above inequality is strict for $m\geq 2p$. In other words,
we show that $\varphi$ is not a bijective map for $2p\leq m\leq 2(p+g)-3$.
Hence it is sufficient to find a spanning forest $\bar{F}_2^{\prime}\in \mathcal {F}_{22}^{\prime}$,
such that $\bar{F}_2^{\prime}\not\in \varphi(\mathcal {F}_{21}^{\prime})$. Let
$\bar{T}^{\prime\prime}, \bar{S}^{\prime\prime}\in \bar{F}_2^{\prime}$, where $\bar{T}^{\prime\prime}$ is the path:
$x_1w_{p-1}w_{p-2}\cdots w_1u_1u_g (x_1\in N_G(w_p)\setminus \{w_{p-1}, v_1)\})$, $\bar{S}^{\prime\prime}$ is the path:
$w_pv_1\cdots v_{p-1}$. The rest $m-2p$ edges are chosen from the edge set $E(G)\setminus
(\{w_{p-1}w_p,u_1u_2, u_gu_{g-1}, v_{p-1}v_{p}\}\cup E(\bar{T}^{\prime\prime})\cup E(\bar{S}^{\prime\prime}))$
of order $n-2p-4$, it is obvious that $n-2p-4\geq m-2p$. Suppose that
there exists $\bar{F}_1^{\prime}\in \mathcal {F}_{21}^{\prime}$,
and $\varphi(\bar{F}_1^{\prime})=\bar{F}_2^{\prime}$. Let $\bar{T}^{\prime}$ be the component of $\bar{F}_1^{\prime}$ which
contains $w_{p-1}$. By the definition of $\varphi$, $|V(\bar{S}^{\prime\prime})|=|V(\bar{T}^{\prime})|-a,$
so $u_2$ is the first vertex on $P_{p+g-1}$ which does not belong to $\bar{T}^{\prime}$, then $u_1u_2\in \varphi(\bar{F}_1^{\prime})$.
Since $u_1u_2\not\in \bar{F}_2^{\prime}$, then $\varphi(\bar{F}_1^{\prime})\neq \bar{F}_2^{\prime}$, a contradiction.
 This completes the proof of Lemma~\ref{lemma3.1}.
\end{proof}

\begin{lemma}\label{lemma3.2}
For $1\le p\neq q\le \lfloor\frac{n-g-gl+l}{l+1}\rfloor$,  $U^p$ and $U^{q}$ are incomparable  in the poset  $(\mathcal{U}_{n,l}^{g}, \preceq)$.
\end{lemma}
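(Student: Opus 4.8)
The plan is to reduce the general case to the consecutive case already settled in Lemma~\ref{lemma3.1}. Without loss of generality assume $p>q$. Since the admissible range $1\le i\le \lfloor\frac{n-g-gl+l}{l+1}\rfloor$ is an interval, every intermediate graph $U^{i}$ with $q\le i\le p$ is well defined and Lemma~\ref{lemma3.1} applies to each consecutive pair $(U^{i},U^{i-1})$. To establish incomparability it suffices to exhibit two indices $m_1,m_2$ with $c_{m_1}(U^p)<c_{m_1}(U^q)$ and $c_{m_2}(U^p)>c_{m_2}(U^q)$, since either of $U^p\preceq U^q$ or $U^q\preceq U^p$ would then be contradicted.

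For the first index I would take $m_1=n-2$. The proof of Lemma~\ref{lemma3.1} shows $c_{n-2}(U^{i})<c_{n-2}(U^{i-1})$ for every $i$ in range, so telescoping along the chain $U^{q},U^{q+1},\dots,U^{p}$ yields $c_{n-2}(U^p)<c_{n-2}(U^q)$. This rules out $U^q\preceq U^p$ at once.

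The heart of the argument is producing a single low-index coefficient at which $U^p$ strictly exceeds $U^q$. A naive telescoping fails: the window $2i\le m\le 2(i+g)-3$ in which Lemma~\ref{lemma3.1} guarantees $c_m(U^i)>c_m(U^{i-1})$ slides with $i$, and these windows have empty common intersection once $p-q\ge g$. The key observation is that strictness is needed for only one step. Writing $D_i(m)=c_m(U^i)-c_m(U^{i-1})$, the proof of Lemma~\ref{lemma3.1} actually yields the weak inequality $D_i(m)\ge 0$ for \emph{all} $m\le 2(i+g)-3$, the lower bound $2i$ being used only to force strictness. I would therefore choose $m_2=2q+2=2(q+1)$. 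For $i=q+1$ the value $m_2$ is exactly the left endpoint of the window $[2(q+1),2(q+1+g)-3]$ attached to the pair $(U^{q+1},U^{q})$ (nonempty since $g\ge 3$), so $D_{q+1}(m_2)>0$; and for every $i\in\{q+2,\dots,p\}$ one has $m_2=2q+2\le 2(i+g)-3$, so $D_i(m_2)\ge 0$. Summing,
$$c_{2q+2}(U^p)-c_{2q+2}(U^q)=\sum_{i=q+1}^{p}D_i(2q+2)\ge D_{q+1}(2q+2)>0,$$
which rules out $U^p\preceq U^q$ and, together with the previous paragraph, finishes the proof.

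I expect the main obstacle to be precisely the step flagged above: verifying that Lemma~\ref{lemma3.1} supplies the weak monotonicity $D_i(m)\ge 0$ across the full range $m\le 2(i+g)-3$, rather than only strict monotonicity on the sliding window. Re-reading that proof, the injection $\varphi:\mathcal{F}_{21}'\to\mathcal{F}_{22}'$ and the sign analysis of $\gamma(f(F'))-\gamma(F')$ require $m\le 2(i+g)-3$ solely to keep $\varphi$ well defined, so the nonnegativity indeed persists all the way down to the smallest coefficients. This uniform nonnegativity is exactly what makes the single choice $m_2=2q+2$ legitimate independently of the size of the gap $p-q$.
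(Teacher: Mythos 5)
Your proof is correct and follows essentially the same route as the paper: the paper likewise telescopes $c_{n-2}$ downward along the chain $U^q,\dots,U^p$, and telescopes a low-order coefficient upward using the strict inequality from the proof of Lemma~\ref{lemma3.1} only at the step $(U^{q+1},U^{q})$ together with the weak inequality $c_m(U^{i})\ge c_m(U^{i-1})$ (valid for all $m\le 2(i+g)-3$) at the remaining steps. The only cosmetic difference is that the paper records the conclusion on the whole window $2(q+1)\le m\le 2(q+g+1)-3$, whereas you use just its left endpoint $m=2(q+1)$.
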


\begin{proof}
Without loss of generality, assume $p-q= h \geq1$.
By using Lemma~\ref{lemma3.1} repeatedly, we have
$$c_{n-2}(U^{p})< c_{n-2}(U^{p-1})<\cdots<c_{n-2}(U^{q}),$$
$$ c_{m}(U^{p})\geq c_{m}(U^{p-1})\geq\cdots c_{m}(U^{q+1})>c_{m}(U^{q}), \mbox{ for } 2(q+1)\leq m\leq 2(q+g+1)-3.$$
Thus  the assertion holds.
\end{proof}

Now we are ready to characterize all minimal elements in $\mathcal{U}_{n,l}^{g,1}$.
\begin{theorem}\label{theorem3.3}
 There are exactly $p+1$ minimal elements  $U^0, \cdots, U^p$ in  $\mathcal{U}_{n,l}^{g,1}$, where $p=\lfloor\frac{n-g-gl+l}{l+1}\rfloor$.
  \end{theorem}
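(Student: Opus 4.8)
The plan is to reduce every graph to the family $\{U^j\}$ via Theorem~\ref{theorem2.5} and then to read off which members are minimal using the incomparability Lemmas~\ref{lemma3.1}--\ref{lemma3.2} together with the $\xi$-transformation. First I would record that $U^0,U^1,\dots,U^p$ are pairwise incomparable. Lemma~\ref{lemma3.2} gives this whenever both indices are at least $1$, and the same chain of inequalities used in its proof (iterating Lemma~\ref{lemma3.1}, whose base case $p=1$ already compares $U^1$ with $U^0$) extends incomparability to every pair involving $U^0$. Hence these $p+1$ graphs are distinct and form an antichain in $(\mathcal{U}_{n,l}^{g,1},\preceq)$.

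The decisive step is to prove that every $G\in\mathcal{U}_{n,l}^{g,1}$ dominates one of them, i.e. $U^j\preceq G$ for some $0\le j\le p$. Since all $l$ leaves of $G=C_{T_1,T_2,\dots,T_g}$ (with $|V(T_i)|=1$ for $i\ge 2$) lie in $T_1$, Theorem~\ref{theorem2.5} and Corollary~\ref{cor2.6} reduce $G$ to a graph $U^{j'}$, a handle $P_{j'+1}$ joining the cycle to the center of $BST_{\,n-g+1-j',\,l}$, with $U^{j'}\preceq G$. Choose $j'$ smallest among all such reductions, and suppose $j'>p$. Let $c$ be the center of the balanced star and $w_{j'-1}$ its neighbour on the handle; the path from $w_{j'-1}$ back along the handle and once around the cycle has length $s=j'+g-2$, while a shortest arm of $BST_{\,n-g+1-j',\,l}$ is a pendant path at $c$ of length $t=\lfloor\tfrac{n-g-j'}{l}\rfloor$. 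The computation in the proof of Lemma~\ref{lemma3.1} shows that $s\ge t$ holds exactly when $j'>p=\lfloor\tfrac{n-g-gl+l}{l+1}\rfloor$, so Lemma~\ref{lemma2.3} applies to the cut edge $w_{j'-1}c$ and yields a graph $H\prec U^{j'}$ lying again in $\mathcal{U}_{n,l}^{g,1}$. Reducing $H$ by Theorem~\ref{theorem2.5} gives some $U^{j''}\preceq H\prec U^{j'}$, and since $c_{n-2}(U^{\,\cdot})$ is non-decreasing for indices $\ge p$ (again read off from the Lemma~\ref{lemma3.1} computation, now with the non-strict direction), the strict drop $c_{n-2}(U^{j''})<c_{n-2}(U^{j'})$ forces $j''<j'$, contradicting the minimality of $j'$. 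Therefore $j'\le p$.

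With the antichain and this domination property in hand, the count follows by the usual poset argument. Each $U^j$ with $0\le j\le p$ is minimal: if some $H\prec U^j$, then by domination $U^{j''}\preceq H\prec U^j$ for a suitable $j''\le p$, whence $U^{j''}\prec U^j$; the case $j''=j$ is impossible (it would squeeze $c_k(H)=c_k(U^j)$ for all $k$) and $j''\ne j$ contradicts the antichain property. Conversely, if $M$ is any minimal element, domination gives $U^{j''}\preceq M$ with $j''\le p$; minimality rules out the strict inequality, so $c_k(M)=c_k(U^{j''})$ for all $k$, and the equality clause of Theorem~\ref{theorem2.5} forces $M\cong U^{j''}$. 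Thus the minimal elements are exactly $U^0,\dots,U^p$, which is $p+1$ of them.

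I expect the technical heart to be the case $j'>p$ in the second paragraph: one must verify at the integer level that the path-length hypothesis $s\ge t$ of Lemma~\ref{lemma2.3} is genuinely equivalent to $j'>p$, matching the floor defining $p$ against the floor in the shortest-arm length $t$, and that the auxiliary monotonicity of $c_{n-2}(U^{\,\cdot})$ beyond $p$ comes out of the Lemma~\ref{lemma3.1} computation with the correct direction. A tempting shortcut would be to prove $U^p\preceq U^{j'}$ directly for every $j'>p$, so that each $G$ dominates $U^p$ itself; but this would require controlling the coefficients $c_m$ with $m>2(p+g)-3$, a range the proof of Lemma~\ref{lemma3.1} never touches, so the indirect minimality-of-$j'$ argument above seems safer.
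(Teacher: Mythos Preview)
Your proposal is correct and follows essentially the same strategy as the paper: reduce an arbitrary $G\in\mathcal{U}_{n,l}^{g,1}$ to some $U^{j'}$ via Theorem~\ref{theorem2.5}, push $j'$ down to at most $p$ using the $\xi$-transformation of Lemma~\ref{lemma2.3}, and invoke the incomparability of $U^0,\dots,U^p$ from Lemmas~\ref{lemma3.1}--\ref{lemma3.2}.

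The one place where your argument differs is the treatment of the case $j'>p$. The paper is more direct here: applying the $\xi$-transformation to $U^{j'}$ at the cut edge $w_{j'-1}w_{j'}$ and keeping the \emph{shortest} arm produces a star at $w_{j'-1}$ whose arm lengths still differ by at most one, so the output is literally $U^{j'-1}$. One then iterates $U^{j'}\succ U^{j'-1}\succ\cdots\succ U^{p}$ without re-invoking Theorem~\ref{theorem2.5} or any monotonicity of $c_{n-2}$. Your detour through the monotonicity of $c_{n-2}(U^{\,\cdot})$ for indices $\ge p$ is also valid---writing $A=g+j'-1$, $B=\lfloor(n-g-j')/l\rfloor+1$, the $c_{n-2}$ difference in Lemma~\ref{lemma3.1} factors as $(A-B)(n-A-B)$ with $A-B=s-t$ and $n-A-B>0$ for $l\ge 2$, so the sign is exactly that of $s-t$, which is nonnegative precisely when $j'>p$---but it is unnecessary once you notice that $H=U^{j'-1}$ outright.
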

\begin{proof}
For any $G=C_{T_1, T_2, \cdots,T_g}\in \mathcal{U}_{n,l}^{g,1}$ with $|V(T_i)|=1$ for $i=2, \cdots, g$,  by Theorem~\ref{theorem2.5}, there exists a graph $G\succeq G_1=
C_{B_1, B_2, \cdots, B_g}\in \mathcal{U}_{n,l}^{g,1}$, where
$B_1$ is a tree with root $u_1$ obtained by identifying one end $u_{1,p_1}$ of the path $P_{p_1}: u_1u_{1,2}\cdots u_{1,p_1}$
and the center of $BST_{n_1-p_1+1, l_1}$ and $|V(B_i)|=1$ for $i=2, \cdots, g$.
If $p_1> \lfloor\frac{n-g-gl+l}{l+1}\rfloor$, then by Lemma~\ref{lemma2.3}, there exists an cut edge $uv$ such that $G_1\succeq G_2=\xi(G_1, uv)=C_{ B_1', B_2, \cdots, B_g}\in \mathcal{U}_{n,l}^{g,1}$, where $B_1'$ is a tree with root $u_1$ obtained by identifying one end $u_{1,p_1-1}$
of the path $P_{p_1-1}: u_1u_{1,2}\cdots u_{1,p_1-1}$ and the center of $BST_{n_1-p_1+2, l_1}$.
After a series of $\xi$-transformations, there exists a $U^p$ such that $G\succeq U^p$.
If  $p_1\leq\lfloor\frac{n-g-gl+l}{l+1}\rfloor$, then $G\succeq G_1=U^{p_1}.$  On the other hand, by Lemma~\ref{lemma3.2},  $U^0, \cdots, U^p$  are incomparable  in  the poset $(\mathcal{U}_{n,l}^{g,1}, \preceq)$. Hence $U^0, \cdots, U^p$ are exactly all minimal elements in the poset $(\mathcal{U}_{n,l}^{g,1}, \preceq)$.
\end{proof}

\begin{theorem}\label{theorem3.4}
For any $G=C_{T_1,\cdots,T_g}\in  \mathcal{U}_{n,l}^{g,2}$,
 $U^{0}\prec G$
\end{theorem}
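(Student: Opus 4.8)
The plan is to sandwich $G$ above $U^0$ by a short chain of comparisons $U^0\preceq\cdots\preceq G$ and then to locate one strict inequality forcing $U^0\prec G$. Write $G=C_{T_1,\cdots,T_g}$ with the two nontrivial trees rooted at $u_1$ and $u_i$, and set $|V(T_1)|=n_1$, $|V(T_i)|=n_i$ with $l_1,l_i$ leaves, so that $n_1+n_i=n-g+2$ and $l_1+l_i=l$; by relabelling along the cycle we may assume $n_1\ge n_i$. The first step is to invoke Theorem~\ref{theorem2.5}: after the $\xi$-transformations of Lemma~\ref{lemma2.3} and the path-balancing of Lemma~\ref{theorem2.1} performed inside $T_1$ and $T_i$, we obtain a graph $C_{B_1,\cdots,B_g}\preceq G$ whose only nontrivial trees are balanced starlike trees $B_1$ (at $u_1$) and $B_i$ (at $u_i$), at most one of them carrying an initial path.

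The heart of the argument is the second step: transporting the entire attachment at $u_i$ onto $u_1$. I would establish a \emph{relocation comparison} stating that moving all $n_i-1$ non-cycle vertices of $B_i$ from $u_i$ to $u_1$ does not increase any $c_k$ and strictly decreases at least one of them, producing a graph $\widetilde G=C_{\widetilde B_1,\,\mathrm{trivial},\cdots,\mathrm{trivial}}$ in which all $n-g+1$ tree-vertices and all $l$ leaves sit at $u_1$, so that $d_{\widetilde G}(u_1)=l+2>3$ and $\widetilde G\preceq C_{B_1,\cdots,B_g}$. To justify the monotonicity I would argue exactly as in Lemma~\ref{lemma2.3} and Lemma~\ref{lemma3.1}: using the spanning-forest expansion of Theorem~\ref{theorem1.1}, build an injection $f$ from the $k$-component spanning forests of $\widetilde G$ into those of $C_{B_1,\cdots,B_g}$, compare the weights $\gamma=\prod n_j$ component by component, and pair off the wrong-sign forests by a $\varphi$-type map so their contributions cancel, leaving a nonnegative surplus governed by the same product-of-orders comparison that drives the sign in Lemma~\ref{lemma2.3}. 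The strictness is cheapest to read off at $c_{n-2}$: concentrating the two attachments onto one cycle vertex makes the sizes of the two components of each $2$-component forest more unequal, lowering $\sum\gamma$, so $c_{n-2}(\widetilde G)<c_{n-2}(C_{B_1,\cdots,B_g})$, the analogue of the Wiener-type drop computed in Lemma~\ref{lemma3.1}.

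The third step cleans up $\widetilde G$. Since $d_{\widetilde G}(u_1)>3$ and every other tree is trivial, Corollary~\ref{cor2.6} gives $U^0=C_{BST_{n-g+1,l},\,\mathrm{trivial},\cdots,\mathrm{trivial}}\preceq\widetilde G$, with equality iff $\widetilde G\cong U^0$. Chaining the three comparisons yields
$$U^0\preceq\widetilde G\preceq C_{B_1,\cdots,B_g}\preceq G,$$
and because $G$ genuinely has two nontrivial trees whereas $U^0$ has one, the relocation step is a proper move, so the inequality $c_{n-2}(\widetilde G)<c_{n-2}(C_{B_1,\cdots,B_g})$ above is strict; hence $U^0\prec G$.

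I expect the genuine difficulty to be the relocation comparison of the second step. The clean $\xi$-transformation of Lemma~\ref{lemma2.3} is \emph{unavailable} here, because the edges joining $u_i$ to $u_1$ lie on the unique cycle and therefore are not cut edges, so $B_i$ cannot be transplanted across a cut edge. The spanning-forest injection must instead be built directly on the unicyclic graph, carefully bookkeeping which cycle edge is deleted in each forest and how the transported vertices of $B_i$ redistribute between the component containing $u_1$ and the component containing $u_i$; verifying that $\sum\big[\gamma(f(F'))-\gamma(F')\big]$ remains nonnegative (and is strictly positive in the relevant range of $k$) after the $\varphi$-cancellation is where the real work lies.
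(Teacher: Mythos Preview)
Your three-step skeleton (Theorem~\ref{theorem2.5} $\to$ relocation of $B_i$ onto $u_1$ $\to$ Corollary~\ref{cor2.6}) is exactly the paper's structure, and you have correctly identified that the relocation step is the crux and that Lemma~\ref{lemma2.3} does not apply because the cycle edges are not cut edges.

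Where you and the paper diverge is in \emph{how} the relocation inequality is established. You propose to mimic the $\varphi$-pairing of Lemmas~\ref{lemma2.3} and~\ref{lemma3.1}: pair each wrong-sign forest with a right-sign one so that their contributions cancel. The paper does \emph{not} do this. Instead, after setting up the injection $f:\mathcal{F}'\to\mathcal{F}$ and computing $\gamma(f(F'))-\gamma(F')=e_2(b+d+e_1-a-c)N$ (where $e_1,e_2$ count the tree-vertices at $u_1,u_i$ in $T'$, and $a,b,c,d$ count cycle-arc vertices on the two arcs between $u_1$ and $u_i$), the paper exploits a \emph{translation symmetry along the two arcs}: fixing the forest outside $T'\cup S'$ and fixing $e_1,e_2$, it sums over all forests with $a+b=M_1$ and $c+d=M_2$ held constant. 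The double sum telescopes to the closed form $e_2\,N\,M_1(e_1-1)(M_2+1)\ge 0$. This averaging over arc-translations is the key idea you are missing; it sidesteps the bookkeeping you anticipate and yields nonnegativity by a one-line algebraic identity rather than by a combinatorial cancellation map.

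Two further differences worth noting. First, the paper obtains strict inequality for \emph{every} $2\le k\le n-2$ directly, simply because $|V(T_1)|>1$ and $|V(T_i)|>1$ guarantee a forest with $e_1>1$ and $e_2>0$; you only argue strictness at $c_{n-2}$, which is enough for $\prec$ but weaker. Second, your proposed $\varphi$-pairing would need a replacement for the ``extend $T'$ along the path, split $S'$'' move, and it is not clear what that replacement is on the cycle; the paper's averaging argument avoids having to invent one.
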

\begin{proof} By Theorem~\ref{theorem2.5}, we may assume that
$G=C_{B_1, \cdots, B_i, \cdots, B_g}\in \mathcal{U}_{n,l}^{g,2}$, where
$B_1$  is a tree with root $u_1$ obtained by identifying one end $u_{1,p_1}$ of the path $P_{p_1}: u_1u_{1,2}\cdots u_{1,p_1}$
and the center vertex of $BST_{n_1-p_1+1, l_1}$, $B_i$ is $BST_{n_i, l_i}$ with a center vertex $u_i$,
  and $|V(B_j)|=1 $ for $j\neq 1, i.$
  Let
$$G^{\prime}=G-\{u_ix| x\in N_G(u_i)\setminus \{u_{i+1}, u_{i-1}\}\}+ \{u_1x| x\in N_G(u_i)\setminus \{u_{i+1}, u_{i-1}\}\}.$$
We will prove $c_k(G)\geq c_k(G^\prime)$  with at least one strict inequality.
 Clearly, when $k\in \{0,1,n-1,n\}$, $c_k(G)=c_k(G^{\prime})$.
For $2\leq k\leq n-2$, let $\mathcal {F}$ and $\mathcal{F}^{\prime}$
be the spanning forests of $G$ and $G^{\prime}$ with exactly $k$ components, respectively. For
an arbitrary spanning forest $F^{\prime}\in \mathcal{F}^{\prime}$ with $T^{\prime}$ being the component of $F^{\prime}$ containing $u_1$,
let $f: \mathcal{F}^\prime\rightarrow \mathcal{F}, F^\prime\rightarrow F=f(F^\prime)$, where $V(F)=V(F^\prime)$, and
$$E(F)=E(F^{\prime})-\{u_1x|x\in N_{G}(u_i)\cap N_{T^{\prime}}(u_1)\setminus V(C_g)\}+\{u_ix|x\in N_{G}(u_i)\cap N_{T^{\prime}}(u_1)\setminus V(C_g)\}.$$ Then $f$ is injective from $\mathcal{F}^\prime$ to $\mathcal{F}$. Denote
by $N$ the product of the orders of all components containing no $u_1, u_i$. If $u_i \in T^{\prime}$, then $F^\prime$ and $F$ have the same components except for $T^\prime$. Moreover, $F$ has
a  component containing $u_1$ which corresponds to $T^\prime$ in $F^\prime$.
Clearly, the two components have the same orders. Hence $\gamma(F)=\gamma(F^{\prime})$.
  If  $u_i\not\in T^{\prime}$, assume $u_i$ is in a component $S^\prime$ of $F^\prime$. Moreover,  there are $b\geq 0$ vertices in the connected
component containing $u_2$ in $T^\prime-u_1u_2$, and $d\geq 0$ vertices in the connected
component containing $u_g$ in $T^\prime-u_1u_g$, $e_1\geq 1$ vertices (including $u_1$) in the vertex set $V(T_1)$ and $e_2\geq 0$ vertices in the vertex set $V(T_i)\setminus \{u_i\}$.
Furthermore, the tree $S^{\prime}$ contains $a\geq 1$ vertices in the connected
component containing $u_{i}$ in $S^\prime-u_iu_{i+1}$ and $c\geq 0$ vertices in the connected
component containing $u_{i+1}$ in $S^\prime-u_iu_{i+1}$.
Then $F^\prime$ and $F$ have the same components except for $T^\prime$ and $S^\prime$. Moreover, $F$ have
two trees $T$ containing $u_1$ and $S$ containing $u_i$ which correspond to $T^\prime$ and $S^\prime$ in $F^\prime$, respectively. Hence
$$\gamma(f(F^\prime))-\gamma(F^{\prime})=[(a+c+e_2)(b+d+e_1)-(a+c)(b+d+e_1+e_2)]N= e_2(b+d+e_1-a-c)N.$$
Consider the subset $\bar{\mathcal{F}}^{\prime}$ of those spanning forests $F^{\prime}$ with $k$ components which coincide on $G^{\prime}\backslash (T^{\prime}\cup S^{\prime})$ with fixed values $e_1, e_2$. Since the two parts of $F^{\prime}$ on the cycle between $T^{\prime}$ and $S^{\prime}$ may be translated, let $a+b=M_1, c+d=M_2$ be fixed.
 Then
\begin{eqnarray*}
&&\sum_{F^{\prime}\in \bar{\mathcal{F}}^{\prime}, a+b=M_1, c+d=M_2} (\gamma(f(F^\prime))-\gamma(F^{\prime}))\\
&=&\sum_{ a+b=M_1, c+d=M_2} e_2(b+d+e_1-a-c)N\\
&=&e_2N\sum_{c=0}^{M_2} \sum_{b=0}^{M_1-1} (2b+M_2+e_1-2c-M_1)\\
&=&e_2 N M_1 \sum_{c=0}^{M_2} (e_1+M_2-2c-1)\\
&=&e_2 N M_1(e_1-1)(M_2+1)\\
&\geq& 0.
\end{eqnarray*}
Hence
$$\sum_{F^{\prime}\in \mathcal{F}^{\prime}}( \gamma(f(F^\prime))-\gamma(F^{\prime}))
=\sum_{e_1}\sum_{e_2}\sum_{M_1}\sum_{M_2}\sum_{F^{\prime}\in \bar{\mathcal{F}}^{\prime},a+b=M_1, c+d=M_2} (\gamma(f(F^\prime))-\gamma(F^{\prime}))\ge 0.$$
Since $|V(T_1)|>1, |V(T_i)|>1$, there exists one forest $F^{\prime}$ such that $e_1 >1$ and $e_2>0$. Therefore $$c_{n-k}(G^{\prime})=\sum_{F^{\prime}\in\mathcal {F}^{\prime}}\gamma(F^\prime)< \sum_{F^\prime\in \mathcal {F}^\prime}\gamma(f(F^\prime))\leq \sum_{F\in \mathcal {F}}\gamma(F)=c_{n-k}(G), 2\leq k\leq n-2.$$
Hence by Corollary~\ref{cor2.6}, we have $U^0\preceq G^\prime$ with equality if and only if $G^\prime\cong U^0$.
Therefore, $U^0\preceq G^\prime\preceq G$, the assertion holds.
\end{proof}

It  follows from Theorems~\ref{theorem1.2}, \ref{theorem3.3} and \ref{theorem3.4} that the following results hold.
\begin{corollary}\label{corollary3.5}
Let $G=C_{T_1,\cdots,T_g}$ be an arbitrary unicyclic graph in $\mathcal{U}_{n,l}^{g,1}$.
Then for $p= \lfloor\frac{n-g-gl+l}{l+1}\rfloor$,
$$LEL(G)\geq \mbox{min}\{LEL(U^0), LEL(U^1), \cdots, LEL(U^p)\}.$$
\end{corollary}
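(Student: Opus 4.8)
The plan is to deduce this inequality purely from the structural classification already obtained, combined with the monotonicity of $LEL$ with respect to the partial order $\preceq$. No new spanning-forest computation is needed; everything reduces to Theorems~\ref{theorem1.2} and~\ref{theorem3.3}.

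First I would invoke Theorem~\ref{theorem3.3}, which asserts that $U^0, \dots, U^p$ are precisely the minimal elements of the finite poset $(\mathcal{U}_{n,l}^{g,1}, \preceq)$, where $p=\lfloor\frac{n-g-gl+l}{l+1}\rfloor$. The point I actually need is slightly stronger than the existence of these minimal elements: I need that every $G\in\mathcal{U}_{n,l}^{g,1}$ dominates at least one of them, i.e. there is an index $i\in\{0,1,\dots,p\}$ with $U^i\preceq G$. This is exactly what the proof of Theorem~\ref{theorem3.3} establishes constructively. Starting from an arbitrary $G=C_{T_1,\dots,T_g}$ with $|V(T_j)|=1$ for $j=2,\dots,g$, one first applies Theorem~\ref{theorem2.5} to obtain $C_{B_1,B_2,\dots,B_g}\preceq G$, and then performs a finite sequence of $\xi$-transformations (Lemma~\ref{lemma2.3}) together with repeated applications of Lemma~\ref{theorem2.1}; each such step only decreases the Laplacian coefficients and terminates at some $U^i$, so that $U^i\preceq G$.

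Next I would apply Theorem~\ref{theorem1.2}: since $U^i\preceq G$, it follows that $LEL(U^i)\le LEL(G)$. Combining this with the trivial bound $LEL(U^i)\ge \min\{LEL(U^0),\dots,LEL(U^p)\}$ yields
$$LEL(G)\ \ge\ LEL(U^i)\ \ge\ \min\{LEL(U^0),LEL(U^1),\dots,LEL(U^p)\},$$
which is precisely the claimed inequality.

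The argument is essentially immediate once Theorem~\ref{theorem3.3} is available, so there is no substantial obstacle. The only step that warrants care is the assertion that $G$ lies above some $U^i$: this rests on the finiteness of $\mathcal{U}_{n,l}^{g,1}$ (equivalently, on the explicit termination of the reduction procedure inside the proof of Theorem~\ref{theorem3.3}), rather than on the mere abstract existence of minimal elements in the poset. Note also that Theorem~\ref{theorem3.4} is not needed for this particular corollary, since it concerns the disjoint family $\mathcal{U}_{n,l}^{g,2}$; it enters only for the companion $LEL$ statement on that family.
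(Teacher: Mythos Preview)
Your argument is correct and follows the same route as the paper, which simply records that Corollaries~\ref{corollary3.5} and~\ref{corollary3.6} follow from Theorems~\ref{theorem1.2}, \ref{theorem3.3} and~\ref{theorem3.4}; you have merely unpacked that citation for the case of Corollary~\ref{corollary3.5}. Your remark that Theorem~\ref{theorem3.4} is not actually needed here is accurate: the paper lists it only because the two corollaries are stated together, and Theorem~\ref{theorem3.4} is what drives Corollary~\ref{corollary3.6}.
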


\begin{corollary}\label{corollary3.6}
Let $G=C_{T_1,\cdots,T_g}$ be an arbitrary unicyclic graph in $\mathcal{U}_{n,l}^{g,2}$. Then
$LEL(G)> LEL(U^0)$.
\end{corollary}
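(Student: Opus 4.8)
The plan is to derive this corollary directly from the strict poset comparison established in Theorem~\ref{theorem3.4}, combined with the monotonicity of the Laplacian-like energy along the order $\preceq$ recorded in Theorem~\ref{theorem1.2}. In other words, I would treat Corollary~\ref{corollary3.6} as a one-line transcription of an already-proven structural fact into the language of $LEL$, so that no new combinatorial work is needed here.

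First I would invoke Theorem~\ref{theorem3.4}: for every $G=C_{T_1,\cdots,T_g}\in\mathcal{U}_{n,l}^{g,2}$ we have $U^{0}\prec G$. By the definition of $\prec$ given in Section~1, this means $c_k(U^0)\le c_k(G)$ for all $0\le k\le n$ together with $c_k(U^0)<c_k(G)$ for at least one index $k$; in fact the proof of Theorem~\ref{theorem3.4} supplies the strict inequality $c_{n-k}(U^0)<c_{n-k}(G)$ for every $k$ with $2\le k\le n-2$, so the relation is genuinely $\prec$ and not merely $\preceq$. Then I would apply the second assertion of Theorem~\ref{theorem1.2}: since $U^0\prec G$, it follows that $LEL(U^0)<LEL(G)$, which is exactly the claimed inequality $LEL(G)>LEL(U^0)$.

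There is essentially no obstacle at this stage, since all of the heavy lifting — the forest-counting argument comparing $\gamma(F)$ for $G$ and its transform, and the reduction via Corollary~\ref{cor2.6} down to $U^0$ — was already carried out inside the proof of Theorem~\ref{theorem3.4}. The only point I would be careful to verify is that the strict hypothesis of Theorem~\ref{theorem1.2} is actually met, i.e. that the inequality $c_k(U^0)<c_k(G)$ holds for some $k$; this is guaranteed because membership $G\in\mathcal{U}_{n,l}^{g,2}$ forces $|V(T_1)|>1$ and $|V(T_i)|>1$ for some $i\neq 1$, so $G\not\cong U^0$ and Theorem~\ref{theorem3.4} indeed yields strict inequalities. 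Hence the chain $U^0\prec G$ followed by Theorem~\ref{theorem1.2} completes the argument.
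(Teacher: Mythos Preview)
Your proposal is correct and matches the paper's approach: the paper states that Corollaries~\ref{corollary3.5} and~\ref{corollary3.6} follow from Theorems~\ref{theorem1.2}, \ref{theorem3.3} and~\ref{theorem3.4}, and for Corollary~\ref{corollary3.6} specifically only Theorems~\ref{theorem1.2} and~\ref{theorem3.4} are needed, exactly as you argue.
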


\section{The minimal elements in $\mathcal{U}_{n,l}^3$ and $\mathcal{U}_{n,l}^4$}

In this section, we determine all the minimal elements in the posets $(\mathcal{U}_{n,l}^3, \preceq)$ and $(\mathcal{U}_{n,l}^4, \preceq)$.
Before stating our results, we need the following definitions.

\begin{definition}\label{definition4.1}
 For any $ G\in\mathcal{U}_{n,l}^3$, let $G^\ast$ be the graph obtained from $G$
by changing all the edges (except $E(C_3)$) incident with $u_2, u_3$ into new edges between
$u_1$ and $N_G(u_2)\cup N_G(u_3)\setminus V(C_3)$.
In other words,
\begin{eqnarray*}
G^\ast&=&G-\{u_2x| x\in N_G(u_2)\setminus V(C_3)\}-\{u_3x| x\in N_G(u_3)\setminus V(C_3)\}\\
&&+\{u_1x| x\in N_G(u_2)\cup N_G(u_3)\setminus V(C_3)\}.
\end{eqnarray*}
 We say $G^\ast$ is a $\eta$-transformation of $G$. (See Fig. 4).
\end{definition}
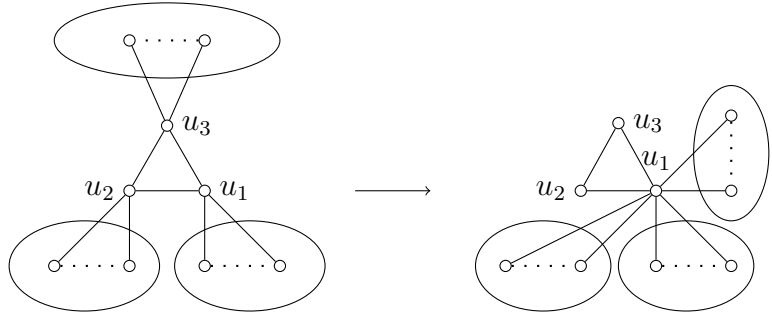
\begin{figure}\label{3}
\centering
\begin{tikzpicture}[scale=1]
\tikzstyle{every node}=[draw,shape=circle,inner sep=1.5pt];
\node (v0)[label=-180:$u_2$] at (0:0){};
\node(v1) [label=0:$u_1$]at (0:1){};
\node(v2) [label=0:$u_3$] at(60:1){};
\node(v3) at (1,2){};
\node(v4) at (0,2){};%u
\node(v5) at (-1,-1){};%v
\node(v6) at (0,-1){};
\node(v7) at (1,-1){};%w
\node(v8) at (2,-1){};
\draw (v0) --(v1)
(v1) --(v2)
(v2)--(v0) (v2)--(v4) (v2)--(v3)
(v0)--(v5) (v0)--(v6) (v1)--(v7) (v1)--(v8);
\draw [loosely dotted,thick] (v3) --(v4)
(v5)--(v6) (v7)--(v8);
\draw  (0.5,2) ellipse (1.5 and 0.5);
\draw (-0.6,-1) ellipse (1 and 0.6);
\draw (1.6,-1) ellipse( 1 and 0.6);
\tikzstyle{every node}=[draw,shape=circle,inner sep=1.5pt];
\node (v0)[label=-180:$u_2$] at (6,0){};
\node(v1) [label=88:$u_1$]at (7,0){};
\node(v2) [label=0:$u_3$] at(6.5,0.9){};
\node(v3) at (8,1){};
\node(v4) at (8,0){};%u
\node(v5) at (5,-1){};%v
\node(v6) at (6,-1){};
\node(v7) at (7,-1){};%w
\node(v8) at (8,-1){};
\draw (v0) --(v1)
(v1) --(v2)
(v2)--(v0) (v1)--(v4) (v1)--(v3)
(v1)--(v5) (v1)--(v6) (v1)--(v7) (v1)--(v8);
\draw [loosely dotted,thick] (v3) --(v4)
(v5)--(v6) (v7)--(v8);
\draw[->] (3,0)--(4,0);
\draw  (8,0.5) ellipse (0.5 and 0.9);
\draw (7.4,-1) ellipse (0.9 and 0.6);
\draw (5.5,-1) ellipse( 0.9 and 0.6);
\end{tikzpicture}
\caption{$\eta$-transformation} \label{fig:pepper}
\end{figure}

\begin{lemma}\label{lemma4.2}
For $G\in\mathcal{U}_{n,l}^3$, if $G^\ast$ is obtained from $G$ by $\eta$-transformation, then
$G^\ast\preceq G$, i.e., $c_{k}(G^\ast)\leq c_{k}(G)$ with equality
if and only if $k\in \{0,1,n-1,n\}$.
\end{lemma}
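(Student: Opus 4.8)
The plan is to run the spanning-forest machinery already used for the $\xi$-transformation (Lemma~\ref{lemma2.3}) and for the single-vertex shift inside Theorem~\ref{theorem3.4}, but to carry out the relocation from the two triangle vertices $u_2$ and $u_3$ to $u_1$ in one stroke. First I would dispose of the boundary coefficients: for $k\in\{0,1,n-1,n\}$ the two graphs agree, since $c_0=1$ and $c_n=0$ are formal, $c_1=2|E|$ is unchanged because $\eta$ only relocates edges and neither adds nor deletes any, and $c_{n-1}=n\tau$ is unchanged because every unicyclic graph with a triangle has exactly $\tau=3$ spanning trees (a spanning tree arises by deleting exactly one of the three cycle edges, all off-cycle edges being forced). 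Hence it suffices to prove the strict inequality $c_{n-k}(G^\ast)<c_{n-k}(G)$ for $2\le k\le n-2$, which at the same time establishes the ``only if'' half of the equality clause.

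For the main range I would invoke Theorem~\ref{theorem1.1} and build an injection $f\colon\mathcal F^\ast\to\mathcal F$ between the $k$-component spanning forests of $G^\ast$ and of $G$. Writing $X_2=N_G(u_2)\setminus V(C_3)$ and $X_3=N_G(u_3)\setminus V(C_3)$, which are disjoint by the definition of $\eta$, the map $f$ sends each edge $u_1x$ of $F^\ast$ with $x\in X_2$ (resp.\ $x\in X_3$) to $u_2x$ (resp.\ $u_3x$) and fixes every other edge; disjointness of $X_2,X_3$ makes $f$ well defined and injective, and it preserves the vertex set and the number of edges, so it sends $k$-component forests to $k$-component forests. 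Letting $T^\ast$ be the component of $F^\ast$ containing $u_1$, and $e_1,e_2,e_3$ the amount of $T_1$-mass (including $u_1$), of $X_2$-mass, and of $X_3$-mass lying in $T^\ast$, I would organize the computation of $\gamma(f(F^\ast))-\gamma(F^\ast)$ by the configuration of the at most two triangle edges present in $F^\ast$, equivalently by whether $u_2$ and $u_3$ lie in $T^\ast$ or in other components. Because $u_2,u_3$ are degree-two vertices of $G^\ast$, the triangle is ``clean'': if both lie in $T^\ast$ the relocation stays inside one component and the weight is unchanged; if exactly one, say $u_3$, is severed from $u_1$ the difference collapses to the familiar two-factor formula $e_3\bigl(|T^\ast|-e_3-|S'|\bigr)N$ of Lemma~\ref{lemma2.3}; and if both are severed the difference becomes a product over the three affected components (or over two, when $u_2u_3\in F^\ast$).

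To control the signs I would reuse the two cancellation devices of the paper. The translation of the break-points along the two arcs of the now-clean triangle, exactly as in the summation $\sum_{a+b=M_1,\,c+d=M_2}$ of Theorem~\ref{theorem3.4}, collapses the arc-dependence into a nonnegative factor; and on the forests with $|T^\ast|-e_i-|S'|<0$ the involution $\varphi$ of Lemma~\ref{lemma2.3} (regrowing the relocated pendant path along the long $u_1$-side structure) pairs each negative contribution with a canceling positive one. Carrying this out term by term yields $\sum_{F^\ast}\bigl(\gamma(f(F^\ast))-\gamma(F^\ast)\bigr)\ge 0$, whence $c_{n-k}(G^\ast)=\sum_{F^\ast}\gamma(F^\ast)\le\sum_{F^\ast}\gamma(f(F^\ast))\le c_{n-k}(G)$. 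Strictness for $2\le k\le n-2$ then follows by exhibiting a single uncancelled forest, for instance one isolating a lone relocated vertex so that $e_2>0$ (or $e_3>0$) while the shrunken $u_1$-component still dominates the receiving component, so that its weight strictly increases under $f$ and it is not hit by the pairing.

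The step I expect to be the genuine obstacle is the sign analysis in the last configuration, where both $u_2$ and $u_3$ are cut off from $u_1$ and $\gamma(f(F^\ast))-\gamma(F^\ast)$ is a true three-factor expression rather than the two-factor difference handled before. There the single involution of Lemma~\ref{lemma2.3} no longer applies verbatim, and the pairing must be set up consistently across the cases of which triangle edges (and hence which of $u_2,u_3$) have been severed. Checking that the combined $X_2$- and $X_3$-contributions telescope to something nonnegative, instead of interfering, is the crux of the argument.
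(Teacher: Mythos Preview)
Your framework---the injection $f$ relocating the $X_2$- and $X_3$-edges back to $u_2,u_3$, and the case split according to which of $u_1,u_2,u_3$ share a component---is exactly what the paper does, and your treatment of the boundary coefficients is fine. The divergence is in the sign control, where you both misplace the difficulty and reach for heavier tools than are needed.

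Concretely: when exactly one of $u_2,u_3$ is severed from $u_1$ in $G^\ast$, that vertex is \emph{isolated} (it has degree~$2$ in $G^\ast$), so $|S'|=1$ and your two-factor difference $e_i(|T^\ast|-e_i-1)N$ is automatically nonnegative; there is nothing for a $\varphi$-pairing to do. When both are severed and both isolated (your ``three-factor'' configuration), the difference is $(a+1)(b+1)(c+1)-(a+b+c+1)=abc+ab+ac+bc\ge 0$, again trivially. The \emph{only} source of a negative sign is the remaining subcase you mention parenthetically: both severed from $u_1$ but joined by the edge $u_2u_3$. There the difference is $(a+1)(b+c+2)-2(a+b+c+1)=(a-1)(b+c)$, negative when $a=0$. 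So the obstacle is not the three-factor case but this two-component one.

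The paper does not handle this negative term by anything like the $\varphi$-involution of Lemma~\ref{lemma2.3} (which needs a long path on the $u$-side that simply is not present here), nor by an arc-translation in the style of Theorem~\ref{theorem3.4}. Instead it observes that for a fixed choice of off-triangle edges (hence fixed $a,b,c,N$), the three forests in the ``two-component'' case differ only in which single triangle edge they contain, and their differences sum to
\[
(a+b+2)(c+1)+(a+c+2)(b+1)+(b+c+2)(a+1)-2(a+b+c+2)-2(a+b+c+1)=2(ab+ac+bc)\ge 0.
\]
This averaging over the three triangle edges is, if you like, the degenerate $g=3$ version of your translation idea, but the point is that it collapses to a one-line algebraic identity, with no injection $\varphi$ required. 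Your proposal would likely get stuck trying to build a $\varphi$ here, because the structure it relies on does not exist in the triangle setting.
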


\begin{proof}
Clearly, when $k\in\{0,1,n-1,n\}$, $c_k(G)=c_k(G^\ast)$. For $2\leq k\leq n-2$,
let $\mathcal{F}^\prime$ (resp. $\mathcal{F}$) be the set of all spanning forests of $G^\ast$ (resp. $G$) with
exactly $n-k$ components.
Let $\mathcal{F}^\prime=\mathcal{F}^{\prime(1)}\cup \mathcal{F}^{\prime(2)}\cup\mathcal{F}^{\prime(3)}$,
where $\mathcal{F}^{\prime(j)}, j=1,2,3$ is the set of all spanning forests of $G^\ast$
in which $u_1, u_2, u_3$ belong to exactly $j$ different components. Similarly,
$\mathcal{F}^{(j)}, j=1,2,3$ can be defined. Let $f: \mathcal{F}^\prime\rightarrow \mathcal{F}$
with $F^\prime \rightarrow F=f(F^\prime)$, where $V(F)=V(F^\prime)$ and
\begin{align*}
E(F)=&E(F^\prime)\\
&-\{u_1x| x\in N_{R^\prime}(u_1)\cap N_G(u_2)\setminus \{u_3\}\}
-\{u_1x| x\in N_{R^\prime}(u_1)\cap N_G(u_3)\setminus \{u_2\}\}\\
&+\{u_2x| x\in N_{R^\prime}(u_1)\cap N_G(u_2)\setminus \{u_3\}\}
+\{u_3x| x\in N_{R^\prime}(u_1)\cap N_G(u_3)\setminus \{u_2\}\},
\end{align*}
for $R^\prime$ being a component of $F^\prime$ containing $u_1$. Clearly $f$ is injective and
$f(\mathcal{F}^{\prime(j)})\subseteq \mathcal{F}^{(j)}$ for $j=1,2,3$.
Denote $|V(T_1)\cap V(R^\prime)\setminus\{u_1\}|=a, |V(T_2)\cap V(R^\prime)\setminus\{u_2\}|=b,
|V(T_3)\cap V(R^\prime)\setminus\{u_3\}|=c$, where $a,b,c\geq 0$. Let $N$ be the product of
the orders of all components of $F^\prime$ containing no $\{u_1, u_2, u_3\}$.
Now we distinguish the proof into the following three cases.

{\bf Case 1:} $F^\prime\in \mathcal{F}^{\prime(1)}$, $u_1, u_2, u_3$ belong to one component,
then $\gamma(F)=\gamma(F^\prime)$. Thus
$$\sum_{F^\prime\in \mathcal{F}^{\prime(1)}}[\gamma(F)-\gamma(F^\prime)]=0.$$

{\bf Case 2:} $F^\prime\in \mathcal{F}^{\prime(2)}$, $u_1, u_2, u_3$ are in two components,
then $\gamma(F)-\gamma(F^\prime)=[(a+b+2)(c+1)+(a+c+2)(b+1)+(b+c+2)(a+1)-2(a+b+c+1)-2(a+b+c+2)]N=[(a+b)c+(a+c)b+(b+c)a]N\geq 0$. Thus
$$\sum_{F^\prime\in \mathcal{F}^{\prime(2)}}[\gamma(F)-\gamma(F^\prime)]\geq 0.$$

{\bf Case 3:} $F^\prime\in \mathcal{F}^{\prime(3)}$, $u_1, u_2, u_3$ are in three components,
then $\gamma(F)-\gamma(F^\prime)=[(a+1)(b+1)(c+1)-(a+b+c+1)]N=(abc+ab+ac+bc)N\geq 0$. Thus
$$\sum_{F^\prime\in \mathcal{F}^{\prime(3)}}[\gamma(F)-\gamma(F^\prime)]\geq 0.$$
Now the inequality $c_{k}(G^\ast)<c_{k}(G), k=2,3,\cdots, n-2$ holds from Theorem~\ref{theorem1.1}
by summing over all possible subsets $\mathcal{F}^\prime$ of spanning forests $F^\prime$ of $G^\ast$ with
$n-k$ components.
\end{proof}

\begin{theorem}\label{theorem4.3}
There are exactly $p+1$ minimal elements
$U_{n,l}^{3,0}, U_{n,l}^{3,1} \cdots, U_{n,l}^{3,p}$
in the  poset $(\mathcal{U}_{n,l}^{3},\preceq)$,
where $p= \lfloor\tfrac{n-3-2l}{l+1}\rfloor$.
    \end{theorem}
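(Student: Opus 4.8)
The plan is to reduce the full girth-$3$ family to the already-settled subfamily $\mathcal{U}_{n,l}^{3,1}$ by a single application of the $\eta$-transformation, and then to run the standard ``dominates a minimal element plus mutual incomparability'' dichotomy. The point special to $g=3$ is that Lemma~\ref{lemma4.2} lets us empty the two trees hanging at $u_2$ and $u_3$ onto $u_1$ all at once while only decreasing the Laplacian coefficients; this collapses an arbitrary $G\in\mathcal{U}_{n,l}^3$ into $\mathcal{U}_{n,l}^{3,1}$, where Theorem~\ref{theorem3.3} has already classified the minimal elements.

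First I would prove that every $G=C_{T_1,T_2,T_3}\in\mathcal{U}_{n,l}^3$ satisfies $U^j\preceq G$ for some $j\in\{0,\dots,p\}$. Applying the $\eta$-transformation at the triangle $C_3=u_1u_2u_3$ produces a graph $G^\ast$ with $G^\ast\preceq G$ by Lemma~\ref{lemma4.2}. Since $\eta$ moves every edge incident with $u_2$ or $u_3$ (other than the cycle edges) onto $u_1$, in $G^\ast$ the trees rooted at $u_2$ and $u_3$ are trivial, the girth is still $3$, and the number of leaves is unchanged: the vertices $u_2,u_3$ sit on the triangle and hence are never leaves, while each pendant vertex of the attached trees remains pendant. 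Therefore $G^\ast\in\mathcal{U}_{n,l}^{3,1}$. By Theorem~\ref{theorem3.3} (more precisely, by the reduction inside its proof, which uses Theorem~\ref{theorem2.5} and the $\xi$-transformation of Lemma~\ref{lemma2.3}) there is an index $j\in\{0,\dots,p\}$ with $U^j\preceq G^\ast$, and so $U^j\preceq G^\ast\preceq G$. The degenerate case $G\in\mathcal{U}_{n,l}^{3,1}$, where $G^\ast=G$, causes no trouble since only the weak inequality $G^\ast\preceq G$ is needed.

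Next I would invoke the fact that $U^0,\dots,U^p$ are pairwise incomparable; this is Lemma~\ref{lemma3.2}, and it is in any case implicit in Theorem~\ref{theorem3.3}, which lists them as exactly the minimal elements of $\mathcal{U}_{n,l}^{3,1}$, incomparability of two coefficient sequences being independent of the ambient family. With both facts in hand the conclusion is the usual poset argument. Each $U^j$ is minimal in $(\mathcal{U}_{n,l}^3,\preceq)$: if $H\preceq U^j$ for some $H\in\mathcal{U}_{n,l}^3$, the first step gives $U^i\preceq H$ for some $i$, whence $U^i\preceq U^j$, forcing $i=j$ by incomparability, so $c_k(H)=c_k(U^j)$ for all $k$ and $H\not\prec U^j$. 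Conversely, if $M$ is any minimal element, the first step gives $U^j\preceq M$; minimality forbids $U^j\prec M$, so $M$ shares the coefficient sequence of $U^j$. Hence $\{U^0,\dots,U^p\}$ is precisely the set of minimal coefficient classes, giving $p+1$ of them, and for $g=3$ the bound $\lfloor\frac{n-g-gl+l}{l+1}\rfloor$ of Theorem~\ref{theorem3.3} specializes to $p=\lfloor\frac{n-3-2l}{l+1}\rfloor$.

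The step I expect to be the genuine crux is the reduction itself, namely that the $\eta$-transformation simultaneously empties $u_2$ and $u_3$ and lands inside $\mathcal{U}_{n,l}^{3,1}$ while decreasing every coefficient; this is exactly the content of Lemma~\ref{lemma4.2}, so the real work is already done there. Everything afterward is bookkeeping: the domination $U^j\preceq G^\ast$ is inherited verbatim from Theorem~\ref{theorem3.3}, and the counting is the standard dichotomy. It is worth flagging why this clean one-shot reduction is proper to $g=3$: for a longer cycle one cannot move all attachments to a single vertex without either altering the girth or failing to decrease the coefficients, which is precisely why $\mathcal{U}_{n,l}^4$ must be handled separately in this section.
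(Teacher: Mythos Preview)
Your proposal is correct and follows essentially the same route as the paper's one-line proof, which simply cites Lemma~\ref{lemma4.2} and Theorems~\ref{theorem3.3} and~\ref{theorem3.4}. The only difference is that you do not invoke Theorem~\ref{theorem3.4}: since the $\eta$-transformation already sends any $G\in\mathcal{U}_{n,l}^3$ into $\mathcal{U}_{n,l}^{3,1}$ (as you verify), Theorem~\ref{theorem3.3} alone finishes the reduction, and the separate treatment of the two-tree case via Theorem~\ref{theorem3.4} is redundant here.
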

\begin{proof}
The assertion follows from Lemma~\ref{lemma4.2} and Theorems~\ref{theorem3.3} and \ref{theorem3.4}.
\end{proof}

\begin{definition}\label{definition4.4}
For $ G\in\mathcal{U}_{n,l}^{4}$, let $G^\star$ be the graph obtained from $G$
by changing all the edges (except $E(C_4)$) incident with $u_2, u_3, u_4$ into new edges between $u_1$ and
$\cup_{i=2}^4 N_G(u_i)\setminus V(C_4)$.
In other words,
\begin{eqnarray*}
G^\star&=&G-\{u_2x| x\in N_G(u_2)\setminus V(C_4)\}-\{u_3x| x\in N_G(u_3)\setminus V(C_4)\}\\
&&-\{u_4x| x\in N_G(u_4)\setminus V(C_4)\}+\{u_1x| x\in \cup_{i=2}^4 N_G(u_i)\setminus V(C_4)\}.
\end{eqnarray*}
We say $G^\star$ is a $\kappa$-transformation of $G$.
\end{definition}
Since for bipartite graphs, the Laplacian coefficients and the signless Laplacian coefficients are equal,
from Lemma 3.1 in \cite{zhang2012}, we have the following lemma:

\begin{lemma}\label{lemma4.5}
For $G\in\mathcal{U}_{n,l}^{4}$, if $G^\star$ is obtained from $G$ by $\kappa$-transformation, then
$G^\star\preceq G$, i.e., $c_{k}(G^\star)\leq c_{k}(G)$ with equality
if and only if $k\in \{0,1,n-1,n\}$.
\end{lemma}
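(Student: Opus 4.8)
The plan is to follow the template of Lemma~\ref{lemma4.2}, where the analogous $\eta$-transformation on the triangle $C_3$ was settled by a direct spanning-forest count. Writing $G = C_{T_1,T_2,T_3,T_4}$ on the cycle $C_4 = u_1u_2u_3u_4$, the $\kappa$-transformation relocates all off-cycle edges at $u_2,u_3,u_4$ onto $u_1$. For $k \in \{0,1,n-1,n\}$ equality is immediate: the edge count is preserved, and since every spanning tree uses the surviving structure the same way, $c_{n-1} = n\tau(G)$ is unchanged. For $2 \le k \le n-2$ I would apply Kelmans' formula (Theorem~\ref{theorem1.1}), $c_{n-k}(G) = \sum_{F} \gamma(F)$, and construct an injection $f$ from the spanning forests $\mathcal{F}'$ of $G^\star$ with $n-k$ components to those of $G$, obtained by reattaching to $u_2,u_3,u_4$ the vertices that $\kappa$ had moved onto $u_1$.

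The comparison would be organized by the number $j \in \{1,2,3,4\}$ of distinct components of $F'$ that contain the cycle vertices $u_1,u_2,u_3,u_4$, exactly as the proof of Lemma~\ref{lemma4.2} splits into $j=1,2,3$. Letting $a,b,c,d \ge 0$ denote the numbers of off-cycle vertices of $T_1,T_2,T_3,T_4$ lying in the component $R'$ of $F'$ containing $u_1$, and $N$ the product of the orders of all components meeting none of $u_1,u_2,u_3,u_4$, each case reduces $\gamma(f(F')) - \gamma(F')$ to a product-difference. For $j=1$ this vanishes; for $j \ge 2$ I expect it to factor into a manifestly nonnegative polynomial in $a,b,c,d$ of the same flavour as the identities $[(a+b)c+(a+c)b+(b+c)a]N$ and $(abc+ab+ac+bc)N$ appearing in Lemma~\ref{lemma4.2}. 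Strictness for the full range $2 \le k \le n-2$ would then follow by exhibiting a single forest on which the relevant product is strictly positive.

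The hard part is that $C_4$, unlike $C_3$, is not complete: $u_1 \not\sim u_3$ and $u_2 \not\sim u_4$. Hence which partitions of $\{u_1,u_2,u_3,u_4\}$ into components actually occur --- and therefore which $\gamma$-difference formula applies --- depends delicately on which of the four cycle edges survive in $F'$, and the cases $j=2,3$ each split into several geometric subcases whose nonnegativity must be verified uniformly. Rather than grind through this bookkeeping, the clean route I would ultimately take exploits that $C_4$ is an even cycle, so every $G \in \mathcal{U}_{n,l}^4$ is bipartite. For a bipartite graph, $L(G)$ and the signless Laplacian $Q(G) = D(G)+A(G)$ are similar through a $\pm 1$ diagonal signature matrix, so their characteristic polynomials coincide and $c_k(G)$ equals the $k$-th signless-Laplacian coefficient. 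The $\kappa$-transformation inequality for the signless-Laplacian coefficients is exactly Lemma~3.1 of \cite{zhang2012}, proved there via the TU-subgraph expansion of $Q$; transporting it back through the bipartite equivalence yields $c_k(G^\star) \le c_k(G)$ for all $k$, with equality precisely for $k \in \{0,1,n-1,n\}$, which is the claim.
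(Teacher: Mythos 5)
Your final argument is exactly the paper's proof: the paper also settles Lemma~\ref{lemma4.5} in one line by observing that every graph in $\mathcal{U}_{n,l}^{4}$ is bipartite, so its Laplacian and signless Laplacian coefficients coincide, and then invoking Lemma~3.1 of \cite{zhang2012} for the $\kappa$-transformation. The forest-counting sketch in your first two paragraphs is dispensable scaffolding, since the bipartite route you ultimately commit to is complete and correct on its own.
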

By combining Lemma~\ref{lemma4.5} and Theorem~\ref{theorem3.3}, we have the following theorem.
\begin{theorem}\label{theorem4.6}
There are exactly $p+1$ minimal elements
$U_{n,l}^{4,0}, U_{n,l}^{4,1} \cdots, U_{n,l}^{4,p}$
in the poset $(\mathcal{U}_{n,l}^{4},\preceq)$,
where $p= \lfloor\tfrac{n-4-3l}{l+1}\rfloor$.
\end{theorem}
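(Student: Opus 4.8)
The plan is to reduce every member of $\mathcal{U}_{n,l}^4$ to the family $\mathcal{U}_{n,l}^{4,1}$ by a single $\kappa$-transformation and then quote the classification of minimal elements in $\mathcal{U}_{n,l}^{g,1}$ already obtained in Section~3. First I would take an arbitrary $G=C_{T_1,T_2,T_3,T_4}\in\mathcal{U}_{n,l}^4$ and apply the $\kappa$-transformation of Definition~\ref{definition4.4}, which moves every edge incident with $u_2,u_3,u_4$ other than the cycle edges onto $u_1$. After this operation the vertices $u_2,u_3,u_4$ have degree two and lie only on $C_4$, so the trees rooted there become trivial; that is, the resulting graph $G^\star$ lies in $\mathcal{U}_{n,l}^{4,1}$. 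The transformation only relocates attachment points and neither creates nor destroys pendent vertices, so $G^\star$ still has $l$ leaves, and by Lemma~\ref{lemma4.5} we have $G^\star\preceq G$.

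Next I would invoke Theorem~\ref{theorem3.3} with $g=4$. That theorem says the minimal elements of $\mathcal{U}_{n,l}^{g,1}$ are exactly $U^0,\dots,U^p$ with $p=\lfloor\tfrac{n-g-gl+l}{l+1}\rfloor$; substituting $g=4$ gives $p=\lfloor\tfrac{n-4-4l+l}{l+1}\rfloor=\lfloor\tfrac{n-4-3l}{l+1}\rfloor$, which is precisely the value in the statement. Hence there is an index $i\in\{0,\dots,p\}$ with $U^i\preceq G^\star$, and therefore $U^i\preceq G^\star\preceq G$. Since $G$ was arbitrary, every element of $\mathcal{U}_{n,l}^4$ dominates one of $U_{n,l}^{4,0},\dots,U_{n,l}^{4,p}$.

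It then remains to check that these $p+1$ graphs are pairwise incomparable inside the full poset $(\mathcal{U}_{n,l}^4,\preceq)$, so that none can be removed from the list. Here I would use that Lemmas~\ref{lemma3.1} and \ref{lemma3.2} are proved for the ambient poset $(\mathcal{U}_{n,l}^g,\preceq)$ rather than only for the subfamily $\mathcal{U}_{n,l}^{g,1}$; setting $g=4$ shows that $U^i$ and $U^j$ are incomparable whenever $0\le i\ne j\le p$, because $c_{n-2}$ is strictly monotone along the chain while a middle coefficient $c_m$ splits the other way. Combining the two facts, namely that every graph dominates some $U^i$ and that $\{U^0,\dots,U^p\}$ forms an antichain, yields that $U_{n,l}^{4,0},\dots,U_{n,l}^{4,p}$ are exactly the minimal elements of $(\mathcal{U}_{n,l}^4,\preceq)$, with no others.

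The genuine content sits entirely in Lemma~\ref{lemma4.5} (imported via the signless-Laplacian identity for bipartite graphs) and Theorem~\ref{theorem3.3}; the assembly is short. The only points that require care are purely bookkeeping: one must confirm that a single $\kappa$-transformation really lands in $\mathcal{U}_{n,l}^{4,1}$ (emptying all three of $T_2,T_3,T_4$ at once rather than reducing the count of nontrivial trees step by step), that it preserves the leaf number, and that the floor expression for $p$ agrees after substituting $g=4$. Because all three are immediate from the definitions, no separate induction on the number of nontrivial branches is needed, in contrast to the $\xi$-based arguments of Section~2; this is the reason the $g=4$ case, unlike the $g=3$ case in Theorem~\ref{theorem4.3}, does not need to appeal separately to Theorem~\ref{theorem3.4}.
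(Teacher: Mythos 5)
Your proposal is correct and is essentially identical to the paper's own (one-sentence) proof: reduce an arbitrary $G\in\mathcal{U}_{n,l}^{4}$ into $\mathcal{U}_{n,l}^{4,1}$ by a single $\kappa$-transformation via Lemma~\ref{lemma4.5}, then invoke Theorem~\ref{theorem3.3} at $g=4$ (whose antichain property rests on Lemmas~\ref{lemma3.1} and \ref{lemma3.2}, valid in the ambient poset $(\mathcal{U}_{n,l}^{4},\preceq)$). Only your closing contrast with the $g=3$ case is off: the $\eta$-transformation of Definition~\ref{definition4.1} likewise empties all non-root trees in one step, so the paper's extra citation of Theorem~\ref{theorem3.4} in Theorem~\ref{theorem4.3} is not explained by a branch-by-branch reduction there; this does not affect the validity of your argument for Theorem~\ref{theorem4.6}.
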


From Theorem~\ref{theorem1.2}, we have the following corollary:
\begin{corollary}\label{corollary4.7}
(1). Let $G=C_{T_1,T_2,T_3}\in \mathcal{U}_{n,l}^3$. Then for $p= \lfloor\tfrac{n-3-2l}{l+1}\rfloor$,
$$LEL(G)\geq \mbox{min}\{U_{n,l}^{3,0}, U_{n,l}^{3,1} \cdots, U_{n,l}^{3,p}\}.$$

(2). Let $G=C_{T_1,T_2,T_3,T_4}\in \mathcal{U}_{n,l}^4$. Then for $p= \lfloor\tfrac{n-4-3l}{l+1}\rfloor$,
$$LEL(G)\geq \mbox{min}\{U_{n,l}^{4,0}, U_{n,l}^{4,1} \cdots, U_{n,l}^{4,p}\}.$$
\end{corollary}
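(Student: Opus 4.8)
The plan is to obtain both inequalities as immediate consequences of the monotonicity of $LEL$ along the partial order $\preceq$ (Theorem~\ref{theorem1.2}), combined with the exact determination of the minimal elements carried out in Theorems~\ref{theorem4.3} and~\ref{theorem4.6}. Since parts~(1) and~(2) are structurally identical, I would prove~(1) in full and then observe that~(2) follows by the same argument with the appropriate substitutions.

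For part~(1), I would first fix an arbitrary $G=C_{T_1,T_2,T_3}\in\mathcal{U}_{n,l}^3$ and note that $(\mathcal{U}_{n,l}^3,\preceq)$ is a finite poset, because there are only finitely many graphs on $n$ vertices. In a finite poset every element lies above some minimal element, so there exists a minimal element $M$ with $M\preceq G$. By Theorem~\ref{theorem4.3} the minimal elements of this poset are precisely $U_{n,l}^{3,0},\dots,U_{n,l}^{3,p}$ with $p=\lfloor\tfrac{n-3-2l}{l+1}\rfloor$; hence $M=U_{n,l}^{3,j}$ for some $0\le j\le p$. Applying Theorem~\ref{theorem1.2} to the relation $M\preceq G$ then yields $LEL\bigl(U_{n,l}^{3,j}\bigr)\le LEL(G)$, and since the left-hand side is at least the minimum over $0\le i\le p$, I would conclude
$$LEL(G)\ \ge\ \min\bigl\{LEL(U_{n,l}^{3,0}),\dots,LEL(U_{n,l}^{3,p})\bigr\},$$
which is the stated bound (along the way one silently reads $LEL$ inside the minimum displayed in the corollary, where the symbol is typographically omitted). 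For part~(2) I would run the identical argument with $\mathcal{U}_{n,l}^4$ replacing $\mathcal{U}_{n,l}^3$ and Theorem~\ref{theorem4.6} replacing Theorem~\ref{theorem4.3}, which supplies the minimal elements $U_{n,l}^{4,0},\dots,U_{n,l}^{4,p}$ with $p=\lfloor\tfrac{n-4-3l}{l+1}\rfloor$, giving
$$LEL(G)\ \ge\ \min\bigl\{LEL(U_{n,l}^{4,0}),\dots,LEL(U_{n,l}^{4,p})\bigr\}.$$

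The proof is essentially bookkeeping, so there is no serious obstacle; the one point that must not be glossed over is the existence of a minimal element \emph{below} each $G$. This is what allows the explicitly named graphs of Theorems~\ref{theorem4.3} and~\ref{theorem4.6} to be placed underneath an arbitrary $G$, and it relies only on the finiteness of the poset together with the fact that those theorems enumerate \emph{all} minimal elements (not merely some of them); once that is granted, Theorem~\ref{theorem1.2} does the rest.
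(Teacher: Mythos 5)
Your proposal is correct and follows essentially the same route as the paper, which derives the corollary directly from Theorem~\ref{theorem1.2} together with the enumeration of all minimal elements in Theorems~\ref{theorem4.3} and~\ref{theorem4.6}; your only addition is to make explicit the standard finite-poset fact that every element lies above some minimal element, which the paper leaves implicit (its proofs of those theorems in fact construct such a minimal element below any given $G$ via the transformations). You also correctly note the typographical omission of $LEL$ inside the minima in the corollary's statement.
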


\section{Remarks}
Although Ili\'{c} and Ili\'{c}'s conjecture is false, we may modify the condition or result such that the conjecture is still true.
In fact, if there are at least two vertices in the cycle with degrees at least 3, then the conjecture is true for $g=3$ and $g=4
$. Moreover, we checked that the conjecture is still true for all unicyclic graphs on $\leq 30$ vertices
with fixed $l, g$ and having at least three vertices in the cycle with degrees at least 3. Hence their conjecture can be modified as follows:

\begin{conjecture}
(1).  For $G\in \mathcal{U}_{n,l}^g$, if there are more than two vertices in the cycle having degrees $\geq 3$, then
$U^0\preceq G$, with equality if and only if $G\cong U^0$.

(2). There are exactly $p+1$ minimal elements
$U_{n,l}^{3,0}, U_{n,l}^{3,1} \cdots, U_{n,l}^{3,p}$
in the  poset $(\mathcal{U}_{n,l},\preceq)$,
where $p= \lfloor\tfrac{n-3-2l}{l+1}\rfloor$.
\end{conjecture}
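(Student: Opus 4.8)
The plan is to treat the two parts separately, using the transformations already established as building blocks and isolating the two genuinely new ingredients. Throughout, I write $T_i$ for the tree attached at $u_i$, so that $u_i$ has degree $\ge 3$ exactly when $|V(T_i)|>1$, and I call such $T_i$ \emph{nontrivial}.

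For part (1) I would first dispose of the base cases $g=3$ and $g=4$, which are already within reach of the paper's machinery. If $G\in\mathcal{U}_{n,l}^3$ has more than two cycle vertices of degree $\ge 3$ (hence all three nontrivial), the $\eta$-transformation of Definition~\ref{definition4.1} gives $G^\ast\preceq G$ by Lemma~\ref{lemma4.2}, with all non-cycle edges relocated to $u_1$; three nontrivial trees contribute at least three non-cycle edges at $u_1$, so $d_{G^\ast}(u_1)>3$ and $G^\ast\in\mathcal{U}_{n,l}^{3,1}$, whence Corollary~\ref{cor2.6} yields $U^0\preceq G^\ast\preceq G$. Since the hypothesis forces $G\not\cong U^0$ (the latter has a single nontrivial cycle vertex), the chain is in fact strict, matching the equality clause. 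The case $g=4$ is identical with the $\kappa$-transformation (Lemma~\ref{lemma4.5}) in place of $\eta$. For general $g$ the natural step is to introduce a consolidation transformation $\psi$ that, given two cycle vertices $u_1,u_i$ of degree $\ge 3$, moves every non-cycle edge at $u_i$ to $u_1$,
$$\psi(G)=G-\{u_ix:x\in N_G(u_i)\setminus V(C_g)\}+\{u_1x:x\in N_G(u_i)\setminus V(C_g)\},$$
and to prove $\psi(G)\preceq G$ with strict inequality for some coefficient whenever $T_1$ and $T_i$ are both nontrivial. Accumulating repeatedly onto a fixed nontrivial $u_1$ empties all other cycle vertices and lands in $\mathcal{U}_{n,l}^{g,1}$ with $d(u_1)>3$, and Corollary~\ref{cor2.6} finishes exactly as above.

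The heart of part (1), and the step I expect to be the main obstacle, is the inequality $\psi(G)\preceq G$ for general $g$. One would mimic the spanning-forest argument of Theorem~\ref{theorem3.4}: build the injection $f:\mathcal{F}'\to\mathcal{F}$ returning the component containing $u_1$ to $u_i$, and evaluate $\gamma(f(F'))-\gamma(F')$ on forests in which $u_1$ and $u_i$ lie in distinct components. In Theorem~\ref{theorem3.4} the clean telescoping to $e_2NM_1(e_1-1)(M_2+1)\ge0$ relied on every intermediate cycle vertex having degree two, so that the two cycle arcs between $u_1$ and $u_i$ could be translated freely. When a third nontrivial tree sits on an intermediate cycle vertex this translation fails, and the sign of the forest-by-forest differences must instead be controlled by a more delicate pairing, or by processing the $T_i$ in an order that keeps the moved vertex adjacent on the cycle to the accumulator. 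Proving nonnegativity of this sum for arbitrary $g$ via Theorem~\ref{theorem1.1} is precisely what the authors verified only for $g=3,4$ and by computer up to $30$ vertices, so this is the genuinely open point.

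For part (2) I would first combine part (1) with Theorems~\ref{theorem3.3} and~\ref{theorem3.4} to conclude that, within each fixed girth $g$, every $G\in\mathcal{U}_{n,l}^g$ satisfies $U_{n,l}^{g,t}\preceq G$ for some $t\in\{0,\dots,p_g\}$ with $p_g=\lfloor\frac{n-g-gl+l}{l+1}\rfloor$; hence the minimal elements of $(\mathcal{U}_{n,l}^g,\preceq)$ are exactly $U_{n,l}^{g,0},\dots,U_{n,l}^{g,p_g}$. It then remains to compare across girths. Here I would introduce a girth-reduction transformation that takes a degree-two cycle vertex $u_g$ adjacent to $u_1$, deletes it with its two cycle edges, reconnects the cycle by $u_1u_{g-1}$, and reinserts $u_g$ to lengthen the attached path by one; this sends $U_{n,l}^{g,t}\mapsto U_{n,l}^{g-1,t+1}$ while preserving $n$ and $l$. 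The index bookkeeping is consistent because $p-p_g=g-3$, so iterating down to girth three maps $U_{n,l}^{g,t}$ to $U_{n,l}^{3,t+g-3}\in\{U_{n,l}^{3,0},\dots,U_{n,l}^{3,p}\}$ with $p=\lfloor\frac{n-3-2l}{l+1}\rfloor$. Proving that each reduction strictly lowers the Laplacian coefficients (a second spanning-forest comparison, already visible at the level of $c_{n-1}$ since $\tau(U^{g,t})=g>g-1=\tau(U^{g-1,t+1})$) shows that no graph of girth $g\ge4$ is minimal in $\mathcal{U}_{n,l}$.

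Finally I would assemble the count. The graphs $U_{n,l}^{3,0},\dots,U_{n,l}^{3,p}$ are pairwise incomparable by Lemma~\ref{lemma3.2}. Moreover no $K$ of girth $\ge4$ can satisfy $K\prec U_{n,l}^{3,t}$: such a $K$ dominates some $U_{n,l}^{3,s}$ by the reduction argument, forcing $U_{n,l}^{3,s}\prec U_{n,l}^{3,t}$ and contradicting Lemma~\ref{lemma3.2}; and no girth-three graph lies strictly below $U_{n,l}^{3,t}$ by minimality within $\mathcal{U}_{n,l}^3$. Hence each $U_{n,l}^{3,t}$ stays minimal in the full poset, while every element of $\mathcal{U}_{n,l}$ dominates one of them, giving exactly $p+1$ minimal elements. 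In this part the main obstacle is the strict girth-reduction inequality, a combinatorial estimate distinct from that of part (1) and likewise not supplied in full generality by the quoted results.
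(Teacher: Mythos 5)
You should first be clear about the status of the target statement: it is the closing \emph{conjecture} of the paper, not a theorem, and the paper supplies no proof of it. The authors only remark that the $g=3,4$ cases (with at least two branch vertices on the cycle) follow from their results, and that the rest was checked by computer for $n\le 30$. So there is no proof of the paper's to match your argument against, and the real question is whether your proposal closes the conjecture. It does not: it is a reduction, not a proof, and you say so yourself. The parts of your plan that are genuinely verifiable do check out against the paper's machinery. For $g=3$ (respectively $g=4$), if all of $u_1,u_2,u_3$ (respectively three cycle vertices) carry nontrivial trees, then the $\eta$-transformation (Lemma~\ref{lemma4.2}) or $\kappa$-transformation (Lemma~\ref{lemma4.5}) produces $G^\ast\prec G$ with $d_{G^\ast}(u_1)\ge 5>3$, and Corollary~\ref{cor2.6} gives $U^0\preceq G^\ast$; together with Theorem~\ref{theorem3.4}, which already handles two nontrivial trees at \emph{arbitrary} girth, this recovers exactly what the authors assert. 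Your index bookkeeping is also correct: $(n-3-2l)-(n-g-(g-1)l)=(g-3)(l+1)$, so $p_3-p_g=g-3$ and the girth-reduction map $U_{n,l}^{g,t}\mapsto U_{n,l}^{g-1,t+1}$ lands in the right range; and your final incomparability argument via Lemma~\ref{lemma3.2} is sound \emph{modulo} the two inequalities you postulate.

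The genuine gaps are precisely the two inequalities you flag, and neither is a routine extension of the paper's proofs. First, the consolidation step $\psi(G)\preceq G$ for general $g$ in the presence of a third nontrivial cycle vertex: the proof of Theorem~\ref{theorem3.4} sums $\gamma(f(F'))-\gamma(F')$ over forests with $a+b=M_1$, $c+d=M_2$ fixed, and the telescoping to $e_2NM_1(e_1-1)(M_2+1)\ge 0$ depends on the forests on the two cycle arcs between $u_1$ and $u_i$ being freely translatable, which forces every intermediate cycle vertex to have degree two. A third nontrivial tree on an arc destroys this bijection among translates, and no replacement pairing is exhibited; your suggestion to "process the $T_i$ in an order that keeps the moved vertex adjacent on the cycle to the accumulator" is a heuristic, not an argument (note that adjacency does not by itself restore the translation symmetry when further branch vertices remain on the cycle). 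Second, the strict cross-girth comparison $U_{n,l}^{g-1,t+1}\prec U_{n,l}^{g,t}$ is asserted but only verified at $c_{n-1}$ (via $\tau$); the coefficients $c_k$ for $2\le k\le n-2$ require a new forest-counting estimate that is nowhere in the paper (indeed $c_2$ is \emph{equal} for the two graphs, since the reduction preserves the degree sequence, so strictness must come from the middle coefficients and cannot be waved through). Since part (2) of your plan consumes part (1) (to place every graph with three or more branch cycle-vertices above some $U^{g,t}$) and then consumes the reduction inequality, both halves of the conjecture remain open under your proposal. In short: a well-organized program whose two load-bearing lemmas are exactly the open content of the conjecture.
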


 \frenchspacing

\end{document}